\documentclass[12pt]{amsart}
\usepackage{amsfonts}
\usepackage{amsmath}
\usepackage{amssymb,latexsym}
\usepackage[mathcal]{eucal}
\usepackage{amscd}
\usepackage[pdftex,bookmarks,colorlinks,breaklinks]{hyperref}

\input xy
\xyoption{all}

\oddsidemargin 0.1875 in \evensidemargin 0.1875in
\textwidth 6 in 
\textheight 230mm \voffset=-4mm


\newcommand{\Acal}{\mathcal{A}}
\newcommand{\Bcal}{\mathcal{B}}
\newcommand{\Ccal}{\mathcal{C}}

\newcommand{\Fcal}{\mathcal{F}}

\newcommand{\Hcal}{\mathcal{H}}

\newcommand{\ch}{\mathbf{1}}

\newcommand{\Z}{\mathbb{Z}}
\newcommand{\Q}{\mathbb{Q}}
\newcommand{\R}{\mathbb{R}}
\newcommand{\C}{\mathbb{C}}
\newcommand{\N}{\mathbb{N}}
\newcommand{\T}{\mathbb{T}}
\newcommand{\E}{\mathbb{E}}
\newcommand{\A}{\mathbb{A}}

\newcommand{\Xb}{\mathbf{X}}
\newcommand{\Yb}{\mathbf{Y}}
\newcommand{\Zb}{\mathbf{Z}}

\newcommand{\al}{\alpha}
\newcommand{\Ga}{\Gamma}
\newcommand{\ga}{\gamma}
\newcommand{\del}{\delta}
\newcommand{\Del}{\Delta}
\newcommand{\ep}{\epsilon}
\newcommand{\sig}{\sigma}

\newcommand{\la}{\lambda}

\newcommand{\om}{\omega}
\newcommand{\Om}{\Omega}

\newcommand{\br}{\vspace{3 mm}}
\newcommand{\imp}{\Rightarrow}

\newcommand{\tri}{\bigtriangleup}

\newcommand{\cls}{{\rm{cls\,}}}

\newcommand{\Iso}{{\rm{Iso\,}}}

\newcommand{\supp}{{\rm{supp\,}}}

\newcommand{\spann}{{\rm{span}}}

\newcommand{\Homeo}{\rm{Homeo\,}}

\newcommand{\vertiii}[1]{{\left\vert\kern-0.25ex\left\vert\kern-0.25ex\left\vert #1 
    \right\vert\kern-0.25ex\right\vert\kern-0.25ex\right\vert}}


\swapnumbers
\theoremstyle{plain}
\newtheorem{thm}{Theorem}[section]

\newtheorem{prop}[thm]{Proposition}

\theoremstyle{definition}
\newtheorem{defn}[thm]{Definition}

\newtheorem{rmk}[thm]{Remark}
\newtheorem{rmks}[thm]{Remarks}

\newtheorem{exa}[thm]{Example}

\newtheorem{prob}[thm]{Problem}






\begin{document}

\title[]
{Weak mixing properties for nonsingular actions 
}

\author{Eli Glasner and Benjamin Weiss}

\address{Department of Mathematics\\
     Tel Aviv University\\
         Tel Aviv\\
         Israel}
\email{glasner@math.tau.ac.il}

\address {Institute of Mathematics\\
 Hebrew University of Jerusalem\\
Jerusalem\\
 Israel}
\email{weiss@math.huji.ac.il}

\begin{abstract}
For a general group $G$ we consider various weak mixing properties of nonsingular actions. 
In the case where the action is actually measure preserving all these properties coincide, and our purpose here is to check which implications persist in the
nonsingular case.
\end{abstract}

\thanks{{\it 2000 Mathematics Subject Classification.}
Primary 22D40, 37A40, 37A25, 37A50}

\keywords{nonsingular action, weak mixing, weak mixing with coefficients, SAT, Stationary systems, $m$-systems, $m$-proximal, 
Furstenberg boundary, free group}


\date{December 14, 2014}


\maketitle

\tableofcontents

\section*{Introduction}

Let $G$ be a second countable locally compact topological group. We are interested here in
various weak mixing properties of nonsingular actions of $G$. A {\em nonsingular}
(or {\em quasi-invariant}) action is a measurable action of $G$ on a standard Lebesgue probability space $(X,\Bcal,\mu)$, 
where the action preserves the measure class of $\mu$ (i.e. $\mu(A) = 0 \iff \mu(gA) =0$ 
for every $A \in \Bcal$ and every $g \in G$).
In the case where the action is actually measure preserving (i.e. $g\mu =\mu$ for every $g \in G$)
all of these weak mixing properties coincide, and our purpose is to check which implications 
persist in the nonsingular case.

More specifically the mixing conditions we consider are defined as follows:


\begin{defn}
Let $\Xb = (X,\Bcal,\mu,\{T_g\}_{g \in G})$ be a nonsingular $G$ action.
\begin{enumerate}
\item
$\Xb$ is {\em ergodic} if every $G$-invariant $B \in \Bcal$ (i.e.
$\mu(gB \tri B)=0$ for every $g \in G$) satisfies $\mu(B) =0$ or $1$. 
\item
$\Xb$ is {\em doubly ergodic}, (DE) if the product system $\Xb \times \Xb$, where $G$ acts diagonally on 
$X \times X$, is ergodic.
\item
$\Xb$ is {\em ergodic with isometric coefficients}, (EIC) if there is no non-constant
Borel measurable equivariant map $\phi : X \to Z$, where $(Z,d)$ is a separable metric space 
on which $G$ acts by isometries.
\item
$\Xb$ is {\em ergodic with unitary coefficients}, (EUC) if there is no non-constant
Borel measurable equivariant map $\phi : X \to H$, where $H$ is a separable Hilbert space 
on which $G$ acts by unitary operators.
\item
$\Xb$ is {\em weakly mixing}, (WM) if it has the multiplier property for probability measure preserving 
ergodic $G$-actions; i.e. for every such p.m.p. action $\Yb = (Y,\Ccal,\nu,\{S_g\}_{g \in G})$ the product system
$\Xb \times \Yb$ is ergodic.
\item
$\Xb$ is {\em $L_\infty$ weakly mixing} ($L_\infty$-WM) if there are
no nontrivial invariant finite dimensional subspaces of $L_\infty(X,\mu)$.
\end{enumerate}
\end{defn}

\begin{rmks}
\begin{enumerate}
\item
The notions of EIC and EUC (and other close variations) appeared first in \cite{BM}, and then also in \cite{Mo}, \cite{Ka3}, \cite{MS} and \cite{BF}.
\item
In a series of related works (e.g. \cite{BFMS}, \cite{GIILS} and \cite{JKLSS})
the authors treat
a notion of ``double ergodicity" which, unfortunately, is different from the one we adopt here
(see Remark \ref{silva} below).
\item
Two more related works \cite{Glu} and \cite{Bj} appeared recently in the Arxiv.
\item
$L_\infty$-eigenfunctions of nonsingular transformations were first introduced by M. Keane
who showed that for a nonsingular $\Z$-action $(X,\Bcal,\mu,T)$ and a probability 
measure preserving $\Z$-action $(Y, \Ccal,\nu,S)$ the product system
$(X \times Y, \mu \times \nu, \Bcal \times \Ccal, T \times S)$ is ergodic iff $\sig_0(e(T)) =0$.
Here $\sig_0$ is the (restricted) maximal spectral type of $S$ and $e(T)$
is the collection of $L_\infty$ eigenvalues of $T$ (see \cite[Theorem 2.7.1]{Aa}). 
From this theorem one can easily deduce the equivalence of WM with
$L_\infty$-WM for $\Z$-actions. 
\item
%
The requirement in the definition of the EIC property that the metric space $Z$ be separable is in
fact redundant as it can be shown that the subset $\cls \supp \phi_*(\mu)$ is necessarily separable.
\end{enumerate}
\end{rmks}

In the first section we sort these conditions according to their strength, and show that they
are all equivalent when $\mu$ is measure preserving. In the second
section we give a short proof of a theorem of Gl\"{u}cksam's \cite{Glu} which says
that when the acting group is locally compact group all of whose irreducible unitary representations
are finite dimensional (a Moore group) WM is equivalent to $L_\infty$-WM.

Strongly almost transitive (SAT) systems are nonsingular systems with the property that every positive set can be blown up to almost full measure. 
In section 3 we show that SAT systems are EIC and give an example of a SAT (hence EIC) 
system which is not DE.

For a well behaved probability measure $m$ on a locally compact group $G$, every $m$-stationary system $\Xb$ (i.e. one in which $m*\mu = \mu$) is nonsingular. 
We treat this class of stationary  systems
in section 4 and, using a strengthened form of a theorem of Kaimanowich, show that
such systems are DE (in fact doubly EIC). 

In sections 5 and 6 we use our previous results
to show that the free group $F_2$ and even the group of integers $\Z$ admit 
nonsingular actions which are EIC but not DE. For $F_2$ this is its Poisson boundary
and for $\Z$ the example arises from a standard planar random walk.
In the final section we add one more characterization to Furstenberg's list of 
equivalent conditions on a stationary system to be $F_m$-proximal. 

\section{A chain of implications}

\begin{thm}\label{imp}
For nonsingular $G$ systems $\Xb = (X,\Bcal,\mu,\{T_g\}_{g \in G})$ we have:
\begin{gather*}
DE \ \imp EIC  \ \imp EUC = WM  \ \imp L_\infty{\text{-}}WM \ \imp  {\text {ergodicity}}.
\end{gather*}
If the $G$ action on $\Xb$ is measure preserving then all these properties (except for
the last one, ergodicity) are equivalent. 
\end{thm}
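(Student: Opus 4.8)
The plan is to prove the chain of implications one link at a time, and then argue that the reverse implications hold in the measure-preserving case.

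\medskip

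\noindent\textbf{The chain of implications.} I would proceed as follows.

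\emph{(DE $\imp$ EIC).} Suppose $\phi: X \to Z$ is a non-constant Borel equivariant map into a separable metric space $(Z,d)$ on which $G$ acts by isometries. Push forward to get a quasi-invariant measure $\phi_*\mu$ on $Z$ and form the function $F(x,y) = d(\phi(x),\phi(y))$ on $X \times X$. Since $G$ acts on $Z$ by isometries, $F$ is $G$-invariant for the diagonal action; it is measurable and non-constant (non-constancy of $\phi$ together with separability of the essential support gives two points where $\phi$ takes values at positive distance). Then any level set $\{F \le c\}$ with $0 < c < \diam(\supp\phi_*\mu)$ violates ergodicity of $\Xb \times \Xb$, contradicting DE. (One small point to handle: verifying that the diagonal product measure $\mu\times\mu$ is indeed nonsingular for the diagonal action, which is immediate from nonsingularity of $\mu$ on each factor and Fubini.)

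\emph{(EIC $\imp$ EUC).} Immediate, since a separable Hilbert space $H$ with a unitary $G$-action is in particular a separable metric space with an isometric $G$-action, so any equivariant $\phi: X \to H$ falls under the EIC hypothesis.

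\emph{(EUC $=$ WM).} For EUC $\imp$ WM: given a p.m.p.\ ergodic system $\Yb = (Y,\Ccal,\nu,\{S_g\})$, suppose $\Xb\times\Yb$ is not ergodic, so there is a non-constant invariant $f \in L^2(X\times Y, \mu\times\nu)$. View $f$ as an element of $L^2(X,\mu; L^2(Y,\nu))$, i.e.\ a map $x \mapsto f_x \in H := L^2(Y,\nu)$; since the $S$-action is measure preserving, the Koopman operators on $H$ are unitary, and invariance of $f$ makes $x\mapsto f_x$ an equivariant map (after checking the orbit of $x \mapsto \|f_x\|$ is constant a.e.\ by ergodicity of $\mu$, so it is genuinely $H$-valued). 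If this map is non-constant we contradict EUC; if it is constant a.e.\ equal to some $h \in H$, then $h$ is an $S$-invariant vector, hence constant by ergodicity of $\Yb$, forcing $f$ constant, a contradiction. For the converse WM $\imp$ EUC: if $\phi: X \to H$ is a non-constant equivariant map into a Hilbert space with unitary $G$-action, the closed subrepresentation generated by $\phi_*\mu$ is a unitary $G$-representation; by standard Gaussian/Wiener functor constructions (or by exhibiting a finite ergodic component of the associated measure) one produces a p.m.p.\ ergodic system $\Yb$ together with a non-constant invariant function on $X\times Y$, contradicting WM. This last step — realizing the abstract unitary coefficient as living over an honest p.m.p.\ system — is where one must be careful; the natural route is the Gaussian measure space construction applied to the real Hilbert space underlying (the orthogonal complement of the invariants in) the cyclic representation generated by $\phi$.

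\emph{(WM $\imp$ $L_\infty$-WM).} Contrapositive: if $V \subseteq L^\infty(X,\mu)$ is a nontrivial finite-dimensional $G$-invariant subspace, then $G$ acts on $V$ through a (finite-dimensional) representation; averaging an inner product over the image of $G$ in $GL(V)$ — which is relatively compact since it preserves a lattice-type structure, or more directly using that $V \subset L^2$ and the normalized action gives a bounded orbit — one can arrange a finite-dimensional unitary coefficient, or directly build a finite ergodic p.m.p.\ factor (e.g.\ a rotation on a compact group) with which $\Xb$ fails to be ergodic. Concretely: take $0 \neq f \in V$ non-constant; the functions $\{gf/\|gf\|_\infty\}$ span a finite-dimensional space, the closure of this set of normalized functions is a compact $G$-space $K$ with an invariant measure, and the evaluation gives an equivariant map exhibiting non-weak-mixing.

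\emph{($L_\infty$-WM $\imp$ ergodicity).} If $\Xb$ is not ergodic, a nontrivial invariant set $B$ gives the $2$-dimensional invariant subspace $\spann\{\ch, \ch_B\} \subseteq L^\infty(X,\mu)$, so $\Xb$ is not $L_\infty$-WM.

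\medskip

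\noindent\textbf{The measure-preserving case.} Here $\mu$ is $G$-invariant, so the Koopman operators $T_g$ on $L^2(X,\mu)$ are unitary and the whole classical theory applies. It suffices to close the loop by proving $L_\infty$-WM $\imp$ DE (the rest of the equivalences then follow from the chain). First, in the p.m.p.\ case $L_\infty$-WM is equivalent to $L_2$-weak mixing, i.e.\ the absence of nontrivial finite-dimensional $G$-invariant subspaces of $L^2(X,\mu)$: indeed any finite-dimensional invariant $V \subseteq L^2$ consists of functions with a common essential bound because the unitary $G$-action on $V$ has relatively compact image (it lies in the compact group $U(V)$), so $\sup_g \|gf\|_\infty < \infty$ would need an argument — the cleaner statement is that $L_2$-weak mixing and $L_\infty$-weak mixing coincide in the p.m.p.\ case, which is classical (the Jacobs–de Leeuw–Glicksberg / Kronecker factor decomposition shows the maximal compact part is generated by $L^\infty$ eigenfunctions). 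Granting that, $L_2$-weak mixing means the Koopman representation on $L^2_0(X,\mu) = \ch^\perp$ has no finite-dimensional subrepresentation, equivalently (by the classical weak mixing dichotomy, via the spectral theorem and the absence of atoms in the maximal spectral type, or via the Hilbert–Schmidt operator argument) the diagonal $G$-action on $L^2_0(X\times X, \mu\times\mu)$ has no nonzero invariant vectors, which is exactly ergodicity of $\Xb\times\Xb$, i.e.\ DE.

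\medskip

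\noindent\textbf{Main obstacle.} The routine links (EIC$\imp$EUC, WM$\imp L_\infty$-WM, $L_\infty$-WM$\imp$ergodicity, and DE$\imp$EIC) are short. The genuinely delicate point is the equality EUC $=$ WM, specifically the direction WM $\imp$ EUC, which requires transferring an abstract ``unitary coefficient'' into an honest probability-measure-preserving ergodic test system — the Gaussian construction is the tool, but one must ensure ergodicity of the relevant component and the non-triviality of the resulting invariant function on the product. In the measure-preserving half, the only real content is the classical equivalence of $L_\infty$-weak mixing with the spectral (Koopman) characterization of weak mixing and hence with double ergodicity; this is standard but should be cited or sketched via the Jacobs–de Leeuw–Glicksberg decomposition.
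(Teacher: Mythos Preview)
Your overall architecture matches the paper's almost step for step: $DE \Rightarrow EIC$ via the invariant distance function, $EIC \Rightarrow EUC$ trivially, $EUC = WM$ via the $L^2(Y,\nu)$-valued map in one direction and the Gaussian construction in the other, and closing the loop in the p.m.p.\ case by passing through $L_2$-weak mixing. The Gaussian step you flag as the ``main obstacle'' is exactly where the paper spends its effort (their Theorem~1.2), and your sketch of it is adequate.

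There is, however, a genuine gap in your argument for $WM \Rightarrow L_\infty\text{-}WM$. You need the image of $G$ in $GL(V)$ to be relatively compact in order to average an inner product, and neither of your justifications works: ``preserves a lattice-type structure'' is not a meaningful statement here, and the embedding $V \subset L^2(\mu)$ does not help because in the nonsingular (non-measure-preserving) setting the Koopman action on $L^2$ is \emph{not} isometric, so orbits in $L^2$ need not be bounded. The correct and simple reason --- which the paper uses --- is that for any nonsingular action the induced action on $L^\infty(X,\mu)$ \emph{is} isometric (null sets go to null sets, so essential suprema are preserved). Hence $G$ acts by isometries on the compact unit sphere of $(V,\|\cdot\|_\infty)$, and its image in $GL(V)$ lies in the compact group $\Iso(V,\|\cdot\|_\infty)$. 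Once you have this, your averaging argument goes through; the paper then builds the equivariant map $X \to V^*$ by evaluation at a basis.

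A smaller point: in the measure-preserving case you appeal to Jacobs--de~Leeuw--Glicksberg to pass from $L_\infty$-WM to $L_2$-WM, and you note yourself that the step ``$\sup_g \|gf\|_\infty < \infty$'' needs an argument. The paper gives a much cleaner one-line proof: pick an orthonormal basis $f_1,\dots,f_n$ of a finite-dimensional invariant $V \subset L^2(\mu)$; since $G$ acts on $V$ by unitaries, the function $x \mapsto \sum_i |f_i(x)|^2$ is $G$-invariant, hence constant by ergodicity, hence each $f_i$ is bounded and $V \subset L^\infty$. This avoids any structure theory.
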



\begin{proof}
$DE \ \imp EIC$. 
Suppose $\Xb$ is doubly ergodic and let $\phi : X \to Z$ be a metric factor. Then the function
$$
(x,x') \to d(\phi(x),\phi(x'))
$$
is an invariant measurable function, hence a constant $t_0 \ge 0$. 
Fix a point $z_0$ in the support of the measure $\phi_*(\mu)$. For any
$r>0$ the ball $B(z_0,r)$ has positive $\phi_*(\mu)$ measure
and it then follows from the constancy $\mu \times \mu$-a.e. of the
distance function that $t_0$ is less than $2r$. Since this holds
for every $r > 0$ it follows that $t_0 = 0$ and
this clearly implies that $\phi$ is constant.

The implications $EIC \imp EUC$ is clear. 
We will show the equivalence of EUC and WM in a separate statement, Theorem \ref{main} below.

To see that the implication 
$$
EUC \imp L_\infty{\text{-}}WM
$$ 
holds, observe that if $V \subset L_\infty(X,\mu)$ is a 
nontrivial (i.e. of dimension $\ge 1$) finite dimensional invariant subspace, then $G$ acts on the compact unit sphere $S_V$ of $V$ by
isometries and thus as a precompact group. (The group of isometries of a compact metric space
equipped with the topology of uniform convergence is a compact group.)
Let $\tilde G$ denote the closure of the image of $G$ in the group
$\Iso(V)$ and let $\la$ denote the normalized Haar measure on the 
compact topological group $\tilde G$.
Pick some Hermitian sesquilinear form $\rho$ on $V$ 
(of course we can assume that $V$ is defined over $\C$) and set
$$
\tilde\rho (u, v) = \int_{\tilde{G}} \rho(gu, gv) \, d\la(g)
$$
Then $\tilde \rho$ is a $G$-invariant inner product on $V$ and $G$ acts on the Hilbert space $(V, \tilde\rho)$ as a group of unitary operators.
Next choose a basis for  $(V, \tilde\rho)$, say $\{f_1, f_2, \dots, f_k\}$. 
We can assume that each $f_i$ is defined everywhere and then, for each $x \in X$,
the assignment $f_i \mapsto f_i(x), i=1,2,\dots,k$, extends uniquely
to a linear functional $\phi(x) \in  H := (V, \tilde\rho)^*$.
Via the form $\tilde\rho$ the latter vector space becomes a Hilbert space
and the unitary $G$-action on $(V, \tilde\rho)$ induces a unitary 
$G$-action on $H$.
One can easily check that the map $\phi : X \to H$
is a non-constant measurable $G$-equivariant map, contradicting the EUC property of $\Xb$.

Suppose now that $\mu$ is ergodic and that in the measure class of $\mu$ there is an invariant probability measure (a unique one, by ergodicity).
So we can assume that $\mu$ itself is invariant.
 

If $V$ is a finite dimensional $G$-invariant subspace of $L_2(\mu)$,
choose an orthonormal basis $\{f_1, f_2,\dots,f_n\}$ for $V$
and observe that the function $\phi : X \to \C$ defined by 
$\phi(x) = \sum_{i=1}^n |f(x)|^2$ is non-zero and $G$-invariant (the action is
via unitary matices). By ergodicity $\phi(x) = C > 0$ is a constant a.e.
and it follows that the functions $f_i$ are bounded,
%
so that $V \subset L_\infty(\mu)$.
Thus it follows that if $\Xb$ is $L_\infty$-WM then $L_2(\mu)$
admits no nontrivial finite dimensional invariant subspaces
(one can call this latter property $L_2$-WM).
As is well known $L_2$-WM implies weak mixing and, in turn,
this implies that $\Xb$ is DE (as $\Xb$ is measure preserving.) We conclude that 
for measure preserving systems all the 
properties DE,  EIC,  EUC, WM, $L_2$-WM, and $L_\infty$-WM
are equivalent.
(See e.g. \cite[Theorem 3.11]{G}).

Finally, the implication $L_\infty$-WM\  $\ \imp \ $ ergodicity is clear. 
\end{proof}

\begin{thm}\label{main}
A nonsingular system $\Xb = (X,\Bcal,\mu,\{T_g\}_{g \in G})$ is weakly mixing iff it is EUC.
\end{thm}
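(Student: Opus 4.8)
The plan is to prove the two implications separately, in each case passing to the contrapositive; the direction $\mathrm{EUC}\imp\mathrm{WM}$ is the easy one, and almost all the work is in $\mathrm{WM}\imp\mathrm{EUC}$.

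For $\mathrm{EUC}\imp\mathrm{WM}$ I would argue as follows. Suppose $\Xb$ is not weakly mixing and fix a p.m.p.\ ergodic system $\Yb=(Y,\Ccal,\nu,\{S_g\}_{g\in G})$ for which $\Xb\times\Yb$ is not ergodic; let $F\in L_2(\mu\times\nu)$ be a $G$-invariant function which is not a.e.\ constant (e.g.\ the indicator of a nontrivial invariant set). Since $\nu$ is invariant, the Koopman representation $\{U^Y_g\}$ of $G$ on the separable Hilbert space $H:=L_2(Y,\nu)$ is unitary. Define $\Phi\colon X\to H$ by $\Phi(x)=F(x,\cdot)$; this is well defined and square-integrable by Fubini. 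The key point is that the identity $\Phi(T_gx)=U^Y_g\Phi(x)$ is, once unwound, exactly the invariance $F\circ(T_g\times S_g)=F$, so $\Phi$ is equivariant; and $\Phi$ is non-constant, since a constant value would force $F$ to depend on the $Y$-variable only, hence to be $S$-invariant, hence constant by ergodicity of $\Yb$ — a contradiction. This exhibits a non-constant equivariant Borel map into a separable Hilbert space carrying a unitary $G$-action, contradicting EUC.

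For $\mathrm{WM}\imp\mathrm{EUC}$ I would start from a non-constant equivariant Borel map $\phi\colon X\to H$ with $G$ acting on the separable Hilbert space $H$ by unitaries $\{W_g\}$, and produce a p.m.p.\ ergodic $\Yb$ with $\Xb\times\Yb$ not ergodic. If $\Xb$ is not ergodic this is immediate (test against the one-point system), so I may assume $\Xb$ ergodic. Since the $W_g$ are isometries, $x\mapsto\|\phi(x)\|$ is $G$-invariant, hence a.e.\ a positive constant, so after rescaling $\phi$ maps into the unit sphere of $H$. Projecting off the closed $G$-invariant subspace of $G$-fixed vectors (along which $\phi$ has constant coordinates, by ergodicity) and rescaling once more, I may further assume $H$ has no nonzero $G$-fixed vector. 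Finally write $H=H_{\mathrm{ap}}\oplus H_{\mathrm{wm}}$, where $H_{\mathrm{ap}}$ is the closed span of all finite-dimensional $G$-invariant subspaces and $H_{\mathrm{wm}}=H_{\mathrm{ap}}^{\perp}$ (so $H_{\mathrm{wm}}$ carries no finite-dimensional subrepresentation, i.e.\ is weakly mixing as a representation), and split $\phi=\phi_{\mathrm{ap}}\oplus\phi_{\mathrm{wm}}$; at least one summand is non-constant. If $\phi_{\mathrm{ap}}$ is non-constant, then (since a vector in $H_{\mathrm{ap}}$ whose projection to every finite-dimensional subrepresentation vanishes is zero) $P_V\phi$ is non-constant for some \emph{irreducible} finite-dimensional $G$-invariant $V\subseteq H$, which is necessarily nontrivial; I take $\Yb$ to be the compact group $K=\cls(\text{image of }G)\subseteq\Iso(V)$ with Haar measure and the $G$-action by translation through $G\to K$ (ergodic because $G$ is dense in $K$), and use Peter--Weyl to embed the nontrivial irreducible $V$ isometrically and $G$-equivariantly into $L_2^0(K,\mathrm{Haar})$, the orthogonal complement of the constants. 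If instead $\phi_{\mathrm{wm}}$ is non-constant, I take $\Yb$ to be the Gaussian action over $H_{\mathrm{wm}}$ regarded as a real Hilbert space; by the classical correspondence between orthogonal representations and Gaussian systems this action is weakly mixing, hence ergodic (precisely because the realification is again a weakly mixing representation), and its complexified first Wiener chaos contains a $G$-equivariant isometric copy of $H_{\mathrm{wm}}$.

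In either case, transporting the relevant component of $\phi$ into $L_2(Y,\nu)$ and rescaling to norm one yields a non-constant equivariant $\Psi\colon X\to L_2(Y,\nu)$ with $\Psi(x)$ orthogonal to the constants for a.e.\ $x$; then $F(x,y):=\Psi(x)(y)$ lies in $L_2(\mu\times\nu)$, is $G$-invariant by equivariance of $\Psi$, and is non-constant because it has norm one and mean zero in the $Y$-variable. Hence $\Xb\times\Yb$ is not ergodic while $\Yb$ is p.m.p.\ ergodic, so $\Xb$ is not weakly mixing, as required. The main obstacle, and the only real content, is exactly the step of realizing (enough of) the abstract unitary representation $H$ inside the Koopman representation of an \emph{ergodic} p.m.p.\ action: no single device works, because the Gaussian construction is \emph{not} ergodic over finite-dimensional representations (the inner-product form is an invariant vector in the second chaos), so one must peel off the almost periodic part and handle it via Peter--Weyl over a compact quotient while the complementary weakly mixing part is handled by the Gaussian construction; reducing the almost periodic case to a single irreducible $V$, so that representation-multiplicity issues in $L_2(K)$ never intervene, is the one mildly delicate bookkeeping point.
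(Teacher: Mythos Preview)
Your proof is correct, and the easy direction ($\mathrm{EUC}\Rightarrow\mathrm{WM}$, via the section map $x\mapsto F(x,\cdot)\in L_2(Y,\nu)$) is exactly what the paper does. For the converse the paper also reaches for the Gaussian construction, but it proceeds more directly: after stripping off the fixed vectors from $H$ it passes to a cyclic subspace, builds the associated Gaussian process, and asserts---citing \cite[Theorems 3.4 and 3.59]{G}---that the absence of nonzero fixed vectors in $H$ already forces the Gaussian system to be ergodic; the invariant function $w(x,\om)=V\phi(x)(\om)$ then finishes the argument in one stroke. Your route differs in that you do \emph{not} rely on that blanket ergodicity claim: you split $H=H_{\mathrm{ap}}\oplus H_{\mathrm{wm}}$, use the Gaussian construction only over the weakly mixing part (where ergodicity---indeed weak mixing---of the Gaussian action is uncontroversial), and handle the almost periodic part by Peter--Weyl on the compact closure $K\subset U(V)$ of the image of $G$ on an irreducible finite-dimensional $V$. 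Your caution is well founded: the Gaussian action built over a nontrivial finite-dimensional unitary representation (say rotation by an irrational angle on $\C$) has the radius as a nonconstant invariant, so ``no fixed vectors'' alone does not give Gaussian ergodicity; the paper is leaning on the cited theorems in \cite{G} to absorb this point. What your approach buys is a self-contained argument that never invokes that reference; what the paper's approach buys is brevity, since once the cited ergodicity criterion is granted there is no case split and no Peter--Weyl.
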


\begin{proof}
1. Assume first that $\Xb$ is EUC and let $\Yb$ be a probability measure preserving $G$-action.
Suppose $W \subset X \times Y$ is a $G$-invariant Borel set and
set $F(x,y) = \ch_W$. Then the map $\phi : X \to L_2(Y,\nu)$ defined by
$\phi(x) = F_x := F(x,\cdot)$ is
clearly measurable and, denoting by $U_g$ the Koopman operator induced by $S_g$
on $L_2(Y,\nu)$ (explicitly, $U_gf(y) = f(S_{g^{-1}}y)$) we have:
\begin{align*}
\phi(T_g x)(y) & = F(T_g x, y) = F(T_{g^{-1}}T_g x, S_{g^{-1}} y) \\
& =
F_x(S_{g^{-1}} y) = (U_g F_x)(y) \\
& = (U_g \phi(x))(y). 
\end{align*}
Hence, for $\mu$-a.e. $x$
$$
\phi(T_g x) = U_g \phi(x).
$$
Since we are assuming that $\Xb$ is EUC, we conclude that $\phi(x) \equiv f$ is a constant $\mu$-a.e.
Then, 
$$
f(S_gy) = F(x,S_gy) = F(T_{g^{-1}}x,y) = f(y)
$$ 
for $\nu$-a.e. $y$ and by the ergodicity of $\Yb$
we conclude that either $f \equiv 0$ or $f \equiv 1$; i.e. $\mu\times \nu(W)$ is either $0$ or $1$.
Thus, the $T \times S$ action is indeed ergodic.

\br

2. We now assume that $\Xb$ is weakly mixing and suppose that $\phi : X \to H$ is a 
measurable equivariant map, where $H$ is a separable Hilbert space 
on which $G$ acts by unitary operators $\{U_g\}_{g \in G}$.
Thus $\mu$-a.e.
$$
\phi(T_gx) = U_g \phi(x).
$$
We assume with no loss in generality that $H = \cls\ \spann( \supp \phi_*(\mu))$. 
Let $H_0 = \{v \in H : U_g v = v,\ \forall g\in G\}$ and $H_1 = H_0^{\perp} =
H \ominus H_0$. If $H_1=0$ then clearly $\phi$ is a constant 
(clearly WM implies ergodicity) and we are done.
So we can now assume that $H = H_1$, i.e. the representation 
$g \mapsto U_g$ on $H$ admits no nonzero fixed vectors.  

By passing to a cyclic subspace we can assume that there is a vector $v_0 \in H$
such that $H = {\cls}{\spann} \{U_g v_0 : g \in G\}$. 
Set
$$
c(g) = \langle U_g v_0,v_0 \rangle,
$$
and let $(\Om,\Fcal,P, \{X_g\}_{g \in G})$ be an associated Gauss process; i.e.
the collection $\{X_g\}_{g \in G}$ is a set of centered random Gauss variables defined on the probability space
$(\Om,\Fcal,P)$ with correlation function
$$
c(gh^{-1}) = \E(X_gX_h).
$$
Then, the translations defined on the $X_g$'s by $h(X_g) = X_{gh}$ define an action of 
$G$ on $(\Om,\Fcal,P)$. Denoting this action by $\{R_g\}_{g \in G}$, the system
$(\Om,\Fcal,P, \{R_g\}_{g \in G})$ becomes a probability measure preserving action of $G$.
The fact that $H$ contains no nonzero fixed vector implies that the system
$(\Om,\Fcal,P, \{R_g\}_{g \in G})$ is ergodic (see e,g. \cite[Theorems 3.4 and 3.59]{G}).
By our assumption then the product action $(X \times \Om, \Bcal \otimes \Fcal, \mu \times P,
\{T_g \times R_g\}_{g \in G})$ is ergodic. 

Now, by the construction of the Gauss process $\{X_g\}_{g\in G}$,
there is a unitary equivalence between $H$ and the {\em first Wiener chaos} 
$\Hcal : = {\cls}{\spann} \{X_g : g \in G\} \subset L_2(\Om,P)$, say,
$$
V : H \to \Hcal 
$$
with $U_{R_g}V = V U_g$ for every $g \in G$. 
Set $w(x,\om) = V \phi(x)(\om)$, then
\begin{align*}
w(T_{g^{-1}} x,\om) &  = V \phi(T_{g^{-1}} x)(\om) = V U_g \phi(x)(\om) \\ 
& = U_{R_g} V \phi(x)(\om)=   U_{R_g} w(x,\om) \\
& = w(x, R_g\om), 
\end{align*}
whence
$$
w(T_{g^{-1}} x,R_{g^{-1}} \om) = w(x,\om) 
$$
for every $g \in G$. By our assumption then $w(x,\om)=V \phi(x)(\om)$ is a constant 
$\mu \times P$ a.e. and this leads to a contradiction if $H$ is nontrivial.
\end{proof}

\begin{rmk}
Our proof of Theorem \ref{main} was motivated by \cite{Be}, and by the proof of Proposition 4.6 
in \cite{ALW}, which is the main ingredient of the proof of Theorem 4.7 in \cite{ALW}.
Of course Theorem \ref{main} is a far reaching generalization of Theorem 4.7 in \cite{ALW}. 
For the implication EUC $\imp$ WM see also \cite[Proposition 2.4]{MS}
\end{rmk}

\begin{prob}
Find an example of a system which is EUC but not EIC.
\end{prob}

\br

\section{Moore groups}
A locally compact group $G$ which has the property that all of its irreducible unitary representations
are finite dimensional is called a {\em Moore group} (see
\cite{M}, \cite{T}). Of course abelian groups are Moore groups and in \cite{M}
it is shown that a locally compact group $G$ is a Moore group if and only if
$G = {\rm proj} \lim G_\al$ where each $G_\al$ is a Lie group which contains
an open subgroup $H_\al$ of finite index which is a $Z$-group. Recall that
a topological group $H$ is a {\em $Z$-group} if $H/Z(H)$ is compact, where $Z(H)$ is the center of $H$
(\cite[Theorem 3]{M}). 

In a recent work A. Gl\"{u}cksam \cite[Theorem 4.3]{Glu} proves that for a second countable Moore group 
WM is equivalent to $L_\infty{\text{-}}WM$. We will next use Theorem \ref{main} to 
obtain a much shorter proof of this result.

%


\begin{thm}
For a second countable Moore group the property 
EUC (hence also WM) is equivalent to the property $L_\infty{\text{-}}WM$.
\end{thm}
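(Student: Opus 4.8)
The plan is to prove the two inclusions separately. The implication EUC $\imp L_\infty$-WM holds for every second countable locally compact group and is already contained in Theorem \ref{imp}, so the whole content here is the converse $L_\infty$-WM $\imp$ EUC for a Moore group, which I would prove by contraposition: assuming $\Xb=(X,\Bcal,\mu,\{T_g\})$ is not EUC, I will construct a nontrivial finite-dimensional $G$-invariant subspace of $L_\infty(X,\mu)$. Fix a non-constant Borel equivariant map $\phi:X\to H$, where $G$ acts on the separable Hilbert space $H$ by unitaries $\{U_g\}$. Since $x\mapsto\|\phi(x)\|$ is a $G$-invariant Borel function and $\Xb$ is ergodic (recall $L_\infty$-WM $\imp$ ergodicity, again from Theorem \ref{imp}), we may renormalize so that $\|\phi(x)\|\equiv 1$; and $\phi$ is \emph{not} $G$-invariant, since an invariant Borel map on an ergodic system is a.e.\ constant.

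The next step brings in the Moore hypothesis. First reduce to the case that the unitary representation $\pi:g\mapsto U_g$ is cyclic: decompose $H=\bigoplus_i H_i$ into countably many cyclic $G$-invariant subspaces; each orthogonal projection $P_{H_i}$ commutes with $\pi$, so $P_{H_i}\phi$ is again Borel and equivariant, and since $\phi(x)=\sum_i P_{H_i}\phi(x)$ is not constant, some $P_{H_i}\phi$ is non-constant, and we replace $(H,\phi)$ by $(H_i,P_{H_i}\phi)$. Now, a second countable Moore group is of type $I$ and all its irreducible unitary representations are finite-dimensional, so $\pi$ admits a direct integral decomposition $H\cong\int^{\oplus}_{\hat G}H_\sigma\,d\mu_\pi(\sigma)$ over the unitary dual, and cyclicity bounds the multiplicity of each $\sigma$, so that every fibre $H_\sigma$ is \emph{finite-dimensional}. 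Write $\phi(x)=(\phi_\sigma(x))_\sigma$ accordingly; on the $\sigma$-fibre $U_g$ acts by a finite-dimensional unitary representation $\tau_\sigma(g)$ which is a multiple of $\sigma(g)$.

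The heart of the argument is an ergodic rigidification. For $\mu_\pi$-a.e.\ $\sigma$ the function $x\mapsto\|\phi_\sigma(x)\|_{H_\sigma}$ is $G$-invariant (each $\tau_\sigma(g)$ is unitary), hence equal to a constant $\rho(\sigma)$ by ergodicity of $\Xb$. Since $\int\rho(\sigma)^2\,d\mu_\pi(\sigma)=\|\phi(x)\|^2=1$ and $\phi$ is not $G$-invariant, the set $\{\sigma:\rho(\sigma)>0\}$ is not concentrated on the trivial representation, so we may fix a non-trivial $\sigma_0\in\hat G$ with $\rho(\sigma_0)>0$ at which all the relevant a.e.\ statements hold. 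Then $\psi(x):=\phi_{\sigma_0}(x)/\rho(\sigma_0)$ is a Borel equivariant map from $X$ into the unit sphere of the finite-dimensional space $H_{\sigma_0}$, and it is non-constant because $\sigma_0$, hence $\tau_{\sigma_0}$, has no nonzero fixed vector. Choosing an orthonormal basis $e_1,\dots,e_k$ of $H_{\sigma_0}$, the coordinate functions $a_i(x)=\langle\psi(x),e_i\rangle$ lie in $L_\infty(X,\mu)$ (they are bounded by $1$), they span a $G$-invariant subspace $V\subset L_\infty(X,\mu)$ because $a_i\circ T_g=\sum_j(\tau_{\sigma_0}(g))_{ij}\,a_j$, and $V$ contains a non-constant function (otherwise $\psi$ would be constant). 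This nontrivial finite-dimensional $G$-invariant subspace contradicts $L_\infty$-WM, completing the proof.

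I expect the main obstacle to be the measure-theoretic bookkeeping in the direct-integral decomposition: arranging that the exceptional $\mu$-null sets in the statements ``$\|\phi_\sigma(\cdot)\|$ is invariant'' and ``$\phi(T_gx)=U_g\phi(x)$'' vary measurably in $\sigma$, so that the selection of the single representation $\sigma_0$ with all the desired properties is legitimate, together with a clean invocation of the (standard but nontrivial) facts that a second countable Moore group is of type $I$ with finite-dimensional dual and that a cyclic representation of such a group decomposes with finite fibre multiplicities. Once $\sigma_0$ is in hand, the remaining verifications are routine.
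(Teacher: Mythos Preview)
Your argument is correct and follows essentially the same route as the paper: normalize $\|\phi(x)\|\equiv 1$ via ergodicity, invoke the direct-integral decomposition of the unitary representation into finite-dimensional pieces (available because a Moore group is type I with finite-dimensional irreducibles), select a fibre on which the map is nontrivial, and read off a finite-dimensional invariant subspace of $L_\infty(X,\mu)$ from the matrix coefficients. The only organizational differences are that the paper first isolates the irreducible finite-dimensional case as a clean lemma (building $V$ from the $G$-orbit of a single coefficient $f_0(x)=\langle\phi(x),h_0\rangle$) and then reduces the general case to it via a direct integral over a parameter space with irreducible fibres, whereas you pass through a cyclic reduction and decompose over $\hat G$ with finite multiplicities; both are standard and lead to the same conclusion.
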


\begin{proof}
By Theorem \ref{main}, EUC = WM.
Thus the implication EUC $\imp L_\infty{\text{-}}WM$ was already proved above in 
Theorem \ref{imp}.
For the other direction assume that $\Xb$ has the property $L_\infty{\text{-}}WM$ and 
assume to the contrary that $\phi : X \to H$ is a nontrivial measurable equivariant map into a separable Hilbert space
on which the group $G$ acts via unitary operators $\{U_g : g \in G\}$.
Again, with no loss in generality, assume that $H = \cls\ \spann( \supp \phi_*(\mu))$. 
We observe that by the ergodicity of $\Xb$ and the assumption that
$G$ acts on $H$ by unitary operators, it follows that the map $x \mapsto \|\phi(x)\|$
is a constant $\mu$-a.e.; since this constant is nonzero we can assume that it is $1$.

Let us first consider the case where $H$ is irreducible and finite dimensional. We then choose 
$h_0 \in H$ with $\|h_0\|=1$ such that the function $f_0 : X \to D$, where $D$
is the unit disk in $\C$, defined by $f_0(x) = \langle \phi(x), h_0 \rangle$ is nonconstant.
Now, $f_0$ is a nonconstant function in $L_\infty(X,\mu)$ and
the space $V = \spann \{g \cdot f_0 : g \in G\}$ is a $G$-invariant subspace of $L_\infty(X,\mu)$.
We have
$$
(g \cdot f_0)(x) = f_0(g^{-1}x) =  \langle \phi(g^{-1}x), h_0 \rangle 
= \langle U_g(\phi(x)), h_0 \rangle =  \langle \phi(x), U_{g^{-1}}h_0 \rangle.
$$
Since for an element $h \in H$ the map $f(x) = \langle \phi(x),h \rangle$ satisfies
the inequality $\|f\|_\infty \le \|h\|$ it follows that the natural map
from $H = \spann \{U_g h_0: g \in G\}$ onto $V$, which sends $h_0$ to $f_0$, is
an isomorphism. Thus $V$ is indeed a nontrivial
finite dimensional $G$-invariant subspace of $L_\infty(X,\mu)$.

The next step of the proof 
is based on the fact that for 
a second countable Moore group $G$ any unitary representation $\pi$ on a separable Hilbert space 
$H$ can be represented as a direct integral  $\pi \cong \int^{\oplus} \pi_t\, dP(t)$, on some parameter space $T$ equipped with a measure $P$, where each $\pi_t$ is an irreducible representation on a finite dimensional Hilbert space $H_t$, so that $H \cong \int^\oplus H_t \, dP(t)$. 
See \cite{Mac} and \cite{D}.
It now follows that if $\phi : X \to H$ is a nontrivial map as above then for a set $A \subset T$
of positive $P$-measure, for every $t \in A$, the composition $\phi_t$ of $\phi$ with the projection onto the component $H_t$, is again a nontrivial map.  
\end{proof}

\br


\section{SAT dynamical systems}

Strongly approximately transitive group actions were first introduced and studied by 
W. Jaworski, \cite{J}.

\begin{defn}
A nonsingular action $\Xb = (X,\Bcal,\mu,\{T_g\}_{g \in G})$ is
{\em strongly almost transitive (SAT)} if for every measurable $A \subset X$
with $\mu(A) >0$, there is a sequence $\{g_n\}\subset G$ with $\mu(g_nA) \to 1$.
\end{defn}

\begin{prop}
\begin{itemize}
\item 
Every SAT system is ergodic.
\item
A SAT system admits no non-identity endomorphism (automorphism).
\item
In every SAT system the center of $G$ acts trivially.
\item
Nilpotent (in particular abelian) groups admit no nontrivial SAT systems.
\end{itemize}
\end{prop}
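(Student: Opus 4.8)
The plan is to establish the four assertions in the order listed, each building on the previous one. For the first item, ergodicity: if $A$ is $G$-invariant with $\mu(A)>0$, then every $g_nA$ has the same measure as $A$, so the SAT condition forces $\mu(A)=1$. For the second item, suppose $\psi:X\to X$ is a Borel measurable $G$-equivariant map (an endomorphism of the nonsingular system) which is not the identity a.e. Then the set $A=\{x:\psi(x)\neq x\}$ — more carefully, a positive-measure subset $A$ on which $\psi$ is ``displaced'', obtained by choosing a countable generating family of Borel sets $\{B_k\}$ and picking $k$ with $\mu(B_k\,\tri\,\psi^{-1}B_k)>0$, then setting $A\subset B_k\setminus\psi^{-1}B_k$ (or the reverse) — has positive measure. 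By equivariance $\psi(g_nA)=g_n\psi(A)$, and since $\psi$ is measure-class-preserving and $\mu(g_nA)\to1$ we also get $\mu(g_n\psi(A))\to 1$. But $g_nA$ and $g_n\psi(A)$ are, for each $n$, respectively inside $g_nB_k$ and disjoint from $g_nB_k$ (using that $A\subset B_k$ and $\psi(A)\cap B_k=\emptyset$), hence disjoint; two disjoint sets cannot both have measure tending to $1$, a contradiction. (For an automorphism the same argument applies, or one applies the endomorphism statement to $\psi$ and $\psi^{-1}$.)

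For the third item, let $z\in Z(G)$. The translation $T_z:X\to X$ commutes with every $T_g$ because $z$ is central, so $T_z$ is a $G$-equivariant nonsingular automorphism of $\Xb$. By the second item $T_z=\mathrm{id}$ a.e., i.e. $z$ acts trivially. For the fourth item, let $G$ be nilpotent and suppose $\Xb$ is a nontrivial SAT $G$-system; we induct on the nilpotency class. The center $Z(G)$ acts trivially by item three, so the action factors through a nonsingular action of $G/Z(G)$, which is again SAT (the SAT property is inherited by the factor since the orbits of $G$ and of $G/Z(G)$ on $X$ coincide) and nilpotent of smaller class. Iterating, the action factors through the trivial group, so $X$ is a single point, contradicting nontriviality. (Abelian is the base case: $G=Z(G)$ acts trivially, so again $X$ is trivial.)

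The main obstacle is the second item, and specifically making the ``displacement set'' argument rigorous in the measurable (rather than topological) category: one must produce a single positive-measure set $A$ and a single Borel set $B$ with $A\subset B$ and $\psi(A)\cap B=\emptyset$ up to null sets, so that $g_nA$ and $g_n\psi(A)$ are genuinely disjoint (mod null) for all $n$. This is where one invokes a countable separating family of Borel sets — the space being standard Lebesgue — to convert ``$\psi\neq\mathrm{id}$ on a positive set'' into ``there exists $B$ with $\mu(B\setminus\psi^{-1}B)>0$''. One also has to be slightly careful that $\psi$ being nonsingular gives $\mu(\psi(A))>0\iff\mu(A)>0$ and, more to the point, that $\mu(g_nA)\to1$ implies $\mu(g_n\psi(A))\to1$; this last step uses that $\psi_*$ (push-forward) is mutually absolutely continuous with $\mu$ only after correcting for the Radon–Nikodym cocycle, but since we only need the \emph{qualitative} statement ``the complement of $g_nA$ is null in the limit implies the complement of its $\psi$-image is null in the limit'', and $\psi$ maps the $\mu$-conull set $g_nA$ to a set whose complement $\psi(X\setminus g_nA)$ has $\mu$-measure $0$ by nonsingularity, the argument goes through without any cocycle computation. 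Once these measure-theoretic bookkeeping points are handled, items three and four are immediate formal consequences.
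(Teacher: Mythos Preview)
Your overall strategy matches the paper's: for item~2, produce a positive-measure set separated from its $\psi$-image, blow it up via SAT, and contradict disjointness. Items 1, 3, and 4 are correctly argued and are exactly what the paper intends when it says ``the last two assertions follow readily.''

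There is, however, a genuine gap in your justification of $\mu(g_n\psi(A))\to 1$. Your reasoning --- that ``the complement of $g_nA$ is null in the limit'' so its $\psi$-image ``has $\mu$-measure $0$ by nonsingularity'' --- does not work: $(g_nA)^c$ is not null, merely of small measure, and nonsingularity ($\psi_*\mu\sim\mu$) is a statement about \emph{null} sets only. What is actually needed is the quantitative $\ep$--$\delta$ form of absolute continuity for finite measures: if $\nu\ll\mu$ then $\mu(E_n)\to 0$ forces $\nu(E_n)\to 0$. For an automorphism one applies this with $\nu=(\psi^{-1})_*\mu$ (which is $\ll\mu$ because $\psi_*\mu\sim\mu$ and $\psi$ is bijective), obtaining $\mu((\psi(g_nA))^c)=\mu(\psi((g_nA)^c))\to 0$ as desired. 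For a non-injective endomorphism one has $\psi(g_nA)^c\ne\psi((g_nA)^c)$ in general, and it is cleaner to avoid forward images altogether: with $A\subset B\setminus\psi^{-1}B$ as in your setup, the set $g_nA$ is disjoint from $\psi^{-1}(g_nB)=g_n\psi^{-1}B$; since $\mu(g_nA)\to 1$ gives $\mu(g_nB)\to 1$, the relation $\psi_*\mu\ll\mu$ yields $\mu(\psi^{-1}(g_nB))=\psi_*\mu(g_nB)\to 1$, and now two disjoint sets both have measure tending to $1$. (The paper's own proof asserts the analogous step $\mu(\phi(g_nB))\to 1$ without comment, so you are not missing anything the paper supplies; but your attempted justification via ``null in the limit'' is incorrect as written.)
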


\begin{proof}
The first assertion is clear. Suppose $\phi : X \to X$ is a non-identity endomorphism
of the SAT system $\Xb$. Thus, $\phi$ is measurable, $G$ equivariant, 
with $\phi_*(\mu) \sim \mu$.
Let $B \in \Bcal$ satisfy $0 < \mu(B)$ and $\mu(B \cap \phi(B)) =0$.
Choose a sequence $g_n \in G$ such that $\mu(g_nB) \to 1$.
Then also $\mu(g_nSB) = \mu(Sg_nB) \to 1$, hence eventually,
$\mu(g_n B \cap S g_n B) = \mu(g_n(B \cap SB)) > 0$; a contradiction.
The last two assertions follow readily.
\end{proof}


Note that if $\Xb$ is nontrivial SAT then the product action on $X \times X$ is never
SAT. In fact for $A \subset X$ with $0 <\mu(A) < 1$ we have
$\mu \times \mu (T_g A \times T_g A^c) = \mu(T_gA)(1 - \mu(T_gA)) \le \frac14$
for every $g \in G$.

%
%

\begin{prop}\label{SAT-EIC}
If $\Xb=(X,\mathcal{B},\mu,G)$ is SAT then it is EIC.
\end{prop}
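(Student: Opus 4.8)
The plan is to prove the statement by contradiction. Assume $\Xb$ is SAT and suppose, towards a contradiction, that there is a non-constant Borel equivariant map $\phi : X \to Z$ into a separable metric space $(Z,d)$ on which $G$ acts by isometries. The first move is to push $\mu$ forward and pass to the support. Writing $\nu = \phi_*(\mu)$, equivariance of $\phi$ together with quasi-invariance of $\mu$ gives $g_*\nu = \phi_*(g_*\mu) \sim \nu$ for every $g \in G$, so $\nu$ is quasi-invariant under the isometric action; consequently the closed support $Z_0 = \supp \nu$ is $G$-invariant, $G$ acts on $Z_0$ by isometries, and after discarding a $\mu$-null set we may assume $\phi : X \to Z_0$ with $\nu$ of full support on $Z_0$. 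The key transfer step is the observation that $(Z_0,\nu)$ is \emph{itself} a SAT $G$-action: given a Borel set $B \subseteq Z_0$ with $\nu(B)>0$, put $A = \phi^{-1}(B)$, so $\mu(A) = \nu(B) > 0$; choosing $g_n$ with $\mu(g_nA) \to 1$ as furnished by the SAT hypothesis for $\Xb$, and using the set identity $g_nA = \phi^{-1}(g_nB)$ which follows directly from equivariance, we get $\nu(g_nB) = \mu(g_nA) \to 1$.

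It then remains to show that a SAT action of $G$ on a separable metric space by isometries, with a fully supported quasi-invariant measure, must live on a single point. Since $\phi$ is non-constant, $Z_0$ contains two distinct points $z_1, z_2$; set $D = d(z_1,z_2) > 0$ and consider the open balls $B_i = B(z_i, D/4)$, which are disjoint and satisfy $\nu(B_i) > 0$ because $z_i \in \supp \nu$. Apply the SAT property of $(Z_0,\nu)$ to $B_1$: there are $g_n \in G$ with $\nu(g_nB_1) \to 1$, and since each $g_n$ is an isometry with $g_n Z_0 = Z_0$ we have $g_nB_1 = B(g_nz_1, D/4) \cap Z_0$. For $n$ large, $\nu(g_nB_1)$ exceeds both $1-\nu(B_1)$ and $1-\nu(B_2)$, which forces $\nu(g_nB_1 \cap B_1) > 0$ and $\nu(g_nB_1 \cap B_2) > 0$; in particular both intersections are non-empty. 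Picking a point in each and applying the triangle inequality twice gives $d(g_nz_1, z_1) < D/2$ and $d(g_nz_1, z_2) < D/2$, so $D = d(z_1,z_2) \le d(z_1, g_nz_1) + d(g_nz_1, z_2) < D$, a contradiction. Hence $Z_0$ is a single point, $\phi$ is $\mu$-a.e. constant, and $\Xb$ is EIC.

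I expect the genuine content to be the two observations isolated above: that the SAT property is inherited by the measured metric factor $(Z_0,\nu)$, and that such a SAT system can carry no nontrivial isometric geometry, since blowing a positive-measure ball up to nearly full measure forces its translate to meet every other positive-measure set and therefore pins the moved centre simultaneously close to two points at distance $D$. No delicate estimates are involved; the only points requiring care are the bookkeeping ones, namely checking that $Z_0 = \supp\nu$ is genuinely $G$-invariant (so that the isometries $g_n$ restrict to it) and that $g_nB_1$ is again a ball of the same radius about $g_nz_1$, both of which reduce to $g_n$ being a surjective isometry of $Z$ preserving the measure class of $\nu$. One should also note that the argument applies verbatim even without first passing to $Z_0$, by simply choosing $z_1,z_2$ inside $\supp\nu$ at the outset.
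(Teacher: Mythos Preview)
Your proof is correct and follows essentially the same approach as the paper's. The only difference is organizational: you explicitly transfer the SAT property to the metric factor $(Z_0,\nu)$ and then argue there, whereas the paper works directly with the preimages $A=\phi^{-1}(U)$, $B=\phi^{-1}(V)$ in $X$ and only pushes forward at the last step; in both cases the contradiction comes from the same observation that an isometric translate of a small ball cannot simultaneously meet two well-separated balls.
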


\begin{proof}
Let $\Xb$ be a SAT system and assume to the contrary that 
$\phi : X \to Z$ is a non-constant,
Borel measurable, equivariant map, where $(Z,d)$ is a separable metric space 
on which $G$ acts by isometries. We assume (as we well may) that $Z = \cls \phi(X)$,
so that $\supp(\nu) = Z$, where $\nu =\phi_*(\mu)$.
We choose an $\ep >0$ and open balls $U = B(z_0, \ep)$ and $V = B(z_1, \ep)$ in $Z$
whose centers satisfy $d(z_0, z_1) > 2\ep$.
Let $A = \phi^{-1}(U)$ and $B = \phi^{-1}(V)$.
Then the sets $A, B \in \Bcal$ have $\mu$-measures strictly between zero and one and 
by the SAT property there is $g \in G$ such that $\mu(gA) > \max (1 - \mu(A), 1 - \mu(B))$.
It follows that both $\mu(gA \cap A) > 0$ and $\mu(gA \cap B) > 0$. Therefore
also $\nu(gU \cap U) >0$ and $\nu(gU \cap V) > 0$, but this contradicts the fact
that $g$ is an isometry on $Z$. 
\end{proof}

\begin{rmk}\label{silva}
The condition on $\Xb$ that for any two positive sets $A, B \in \Bcal$
there is an element $g \in G$ with both $\mu(gA \cap A) > 0$ and $\mu(gA \cap B) >0$,
is called by Silva et.al. ``double ergodiity"
(see \cite{BFMS}, \cite{GIILS} and \cite{JKLSS}).
\end{rmk}

On the other hand we will next see that the SAT property does not necessarily imply double ergodicity. (See also Propositions \ref{F2exa} and \ref{Zexa} below.)

\begin{exa}\label{SATnDE}
Let $G = \R \rtimes \R^+$ be the ``$ax +b$" group. As shown in \cite{J} the natural action of this group
on $\R$ has the property that any absolutely continuous probability measure $\mu$ on $\R$ is SAT. 
(It is also easy to see that the SAT property follows directly from the existence of density points in any Borel
measurable subset  $A \subset \R$ of positive Lebesgue measure.)
Of course such a measure will be SAT also with respect to any countable dense subgroup of $G$.
For concreteness let us consider the group $\Ga = \Q \rtimes \Q^+$, and let $\mu$ be
any probability measure which is in the class of Lebesgue measure on $\R$.
Taking $A,B, C$ and $D$ to be any four disjoint consecutive open intervals in $\R$, we
see that there is no $\ga \in \Ga$ for which $\mu(\ga B \cap D) >0$ and also
$\mu(\ga C \cap A) >0$. It is not hard to check that the cartesian square
$(\R \times \R, \mu \times \mu, \Ga)$ has exactly two ergodic component
$\{(x, y ) : x < y \}$ and $\{(x,y) : x > y\}$. Compare this with 
Example \ref{exa,F2}, where the cartesian square decomposes into a continuum of ergodic 
components according to the value of the cross-ratio;
and with the Example in Section \ref{Sec,Z}, where the cartesian square is dissipative.
\end{exa}

We thus have:
 
\begin{prop}\label{affine}
The nonsingular system $(\R,\mu,\Ga)$ is SAT (hence EIC) but not doubly ergodic.
\end{prop}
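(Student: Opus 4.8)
The plan is to verify the two assertions essentially by unpacking Example \ref{SATnDE} and appealing to results already in hand. For the SAT claim, I would invoke Jaworski's theorem \cite{J}: the natural affine action of $G = \R \rtimes \R^+$ on $\R$ is SAT for any absolutely continuous probability measure $\mu$. The cleanest self-contained argument, as noted in the example, uses Lebesgue density: given $A \subset \R$ with $\mu(A) > 0$, pick a density point $x_0$ of $A$ for Lebesgue measure; by composing a dilation centered at $x_0$ with the appropriate translation one gets group elements $g_n \in \Ga = \Q \rtimes \Q^+$ (rationals suffice since we may approximate the center and the dilation factor and use absolute continuity to control the error) so that $g_n A$ fills an interval around $x_0$ up to arbitrarily small relative Lebesgue measure, hence $\mu(g_n A) \to 1$ because $\mu$ is in the Lebesgue class. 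Once SAT is established, EIC follows immediately from Proposition \ref{SAT-EIC}.

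For the failure of double ergodicity, I would exhibit a nonconstant $\Ga$-invariant measurable function on $\R \times \R$, or equivalently a nontrivial invariant set. The obvious candidate is the set $W = \{(x,y) : x < y\}$: each $\ga \in \Ga$ acts as $t \mapsto qt + p$ with $q > 0$, so it is an orientation-preserving homeomorphism of $\R$ and therefore $\ga \times \ga$ preserves the relation $x < y$; thus $\ga W = W$ for all $\ga$. Since $\mu \times \mu(W) = 1/2$ (as $\mu$ is nonatomic), $W$ is a $\Ga$-invariant set of intermediate measure, so $(\R \times \R, \mu \times \mu, \Ga)$ is not ergodic, i.e. $(\R, \mu, \Ga)$ is not doubly ergodic. (The sharper claim in Example \ref{SATnDE}, that $W$ and its complement are precisely the two ergodic components, is not needed for the proposition and I would relegate it to the discussion; it would require showing that the diagonal action on each half-plane is ergodic, which is where the real work — and the ``four disjoint intervals'' remark — enters, but the proposition as stated only needs non-ergodicity.)

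The main obstacle, such as it is, is the SAT verification with the \emph{countable} group $\Ga = \Q \rtimes \Q^+$ rather than the full group $G$: one must check that rational centers and rational dilation ratios still suffice to blow up a positive set to almost full measure. This is handled by a routine approximation: density points give full-measure concentration under real affine maps, rational affine maps approximate these uniformly on compact sets, and absolute continuity of $\mu$ converts small symmetric-difference estimates in Lebesgue measure into small estimates in $\mu$-measure. Everything else is immediate from Proposition \ref{SAT-EIC} and the orientation-preserving observation.
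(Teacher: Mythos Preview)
Your proposal is correct and follows essentially the same line as the paper's Example~\ref{SATnDE}: SAT via Jaworski/density points (with your added care about passing from $G$ to the dense countable $\Ga$), EIC via Proposition~\ref{SAT-EIC}, and failure of DE via the order-preserving observation that $W=\{x<y\}$ is a nontrivial invariant set. The paper phrases the last point first through the four-intervals obstruction and then states the two-component decomposition, but your direct exhibition of $W$ is the same idea and, as you note, already suffices for the proposition.
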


 \br

\section{$m$-stationary dynamical systems}\label{sec,m}
Let $G$ be a locally compact second countable topological group. A probability 
measure $m$ on $G$  is called {\em spread out} (or {\em etal\'{e}e})
if there exists a convolution power $m^{*n}$ which is not singular with respect to the Haar measure class on $G$.
The measure $m$ is {\em non-degenerate} if the minimal closed semigroup $S \subset G$ with 
$m(S) = 1$ is $G$, and it is {\em symmetric} if it is invariant under
the map $x \mapsto x^{-1},\ x \in G$.
The measure $m$ is {\em admissible} if it is both spread out and non-degenerate.

A probability measure $\mu$ on a Borel dynamical system $(X,\Bcal,G)$ is called  
{\em $m$-stationary} when the equation $m*\mu=\mu$ holds. 
When $m$ is admissible it follows that the system $\Xb=(X,\Bcal,\mu,G)$ is nonsingular; 
i.e. the measure $\mu$ is quasi-invariant: $\mu(gA) =0 \iff \mu(A)=0$ 
for every $A \in \Bcal$ and $g \in G$ (\cite[Lemma 1.1]{NZ}).
 
We can always find a {\em topological model} for an
$m$-system $\Xb$, meaning that $X$ can be chosen to be a compact
metric space on which $G$ acts by homeomorphisms
(see \cite[Theorem 3.2]{V}).

With the measure $m$ one associates a random walk on $G$ as follows.
Let $\Omega=G^\N$ and let $P=m^\N=m\times m \times m \dots$ be the
product measure on $\Omega$, so that $(\Om,P)$ is a probability space.
We let $\xi_n:\Om\to G$, denote the
projection onto the $n$-th coordinate,\ $\ n=1,2,\dots$. We refer
to the stochastic process $(\Om,P,\{\eta_n\}_{n\in\N})$, where
$\eta_n=\xi_1\xi_2\cdots\xi_n$ as the {\em $m$-random walk on $G$}.

A real valued function $f(g)$ for which $\int f(gg')\,dm(g')=f(g)$
for every $g \in G$ is called {\em harmonic}. For a harmonic $f$ we have
\begin{gather*}
E(f(g\xi_1\xi_2\cdots\xi_n\xi_{n+1}|\xi_1\xi_2\cdots\xi_n)\\
=\int f(g\xi_1\xi_2\cdots\xi_n g')\,dm(g') \\ =
f(g\xi_1\xi_2\cdots\xi_n),
\end{gather*}
so that the sequence $f(g\xi_1\xi_2\cdots\xi_n)$ forms a martingale.

If ${\bf{X}} =(X,\mathcal{B},\mu,G)$ is an $m$-stationary system on a compact metric space $X$,
for $F \in C(X)$ let $f(g)= \int F(gx)\,d\mu(x)$. Then the equation 
$m*\mu=\mu$ shows that $f$ is harmonic. 
It is shown (e.g.) in \cite{F1} how these facts combined with the martingale convergence theorem lead to the following:

\begin{thm}
The limits
\begin{equation}\label{1}
\lim_{n\to\infty}\eta_n\mu=\lim_{n\to\infty}\xi_1\xi_2\cdots\xi_n\mu=\mu_\omega,
\end{equation}
exist for $P$ almost all $\om\in\Om$, and
$$
\int \mu_\om \, dP(\om) = \mu.
$$

\end{thm}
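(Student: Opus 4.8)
The plan is to show, one test function at a time, that the measures $\eta_n\mu$ are governed by a bounded martingale, then to promote the resulting almost sure convergence to weak$^*$ convergence of the measures themselves, after which the barycenter identity falls out of stationarity.

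First I would fix $F\in C(X)$ and set $f_F(g)=\int F(gx)\,d\mu(x)$. As already recorded above, the identity $m*\mu=\mu$ makes $f_F$ harmonic, and the martingale computation displayed above then says that $M_n^F:=f_F(\eta_n)=\int F\,d(\eta_n\mu)$ is a martingale with respect to the natural filtration $\mathcal{F}_n=\sigma(\xi_1,\dots,\xi_n)$; it is bounded by $\|F\|_\infty$ since each $\eta_n\mu$ is a probability measure. By the martingale convergence theorem $M_n^F(\om)$ converges for $P$-a.e.\ $\om$.

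Next I would upgrade this to simultaneous convergence. Since $X$ is compact metric, $C(X)$ is separable; choose a countable dense subset $\{F_k\}$. Off a single $P$-null set all the sequences $M_n^{F_k}(\om)$ converge, and since $|M_n^F(\om)-M_n^{F'}(\om)|\le\|F-F'\|_\infty$ uniformly in $n$ and $\om$, an $\ep/3$ argument shows that on a set $\Om_0$ of full measure the limit $\La_\om(F):=\lim_n\int F\,d(\eta_n\mu)$ exists for every $F\in C(X)$. For $\om\in\Om_0$ the functional $\La_\om$ is linear, positive and unital, so by the Riesz representation theorem $\La_\om(F)=\int F\,d\mu_\om$ for a unique probability measure $\mu_\om$ on $X$; equivalently, $\eta_n\mu\to\mu_\om$ in the weak$^*$ topology on the compact metrizable space $P(X)$ of Borel probability measures on $X$. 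As a pointwise weak$^*$ limit of the Borel maps $\om\mapsto\eta_n\mu$, the map $\om\mapsto\mu_\om$ is Borel measurable, so the integral $\int\mu_\om\,dP(\om)$ makes sense.

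Finally, for the identity $\int\mu_\om\,dP(\om)=\mu$, note that for each fixed $n$ the variables $\xi_1,\dots,\xi_n$ are i.i.d.\ with law $m$, so $\eta_n$ has law $m^{*n}$ and therefore $\int_\Om \eta_n\mu\,dP(\om)=m^{*n}*\mu$, which equals $\mu$ by induction from $m*\mu=\mu$. Pairing with $F\in C(X)$ gives $\int F\,d\mu=\int_\Om\big(\int F\,d(\eta_n\mu)\big)\,dP(\om)$; the integrand is bounded by $\|F\|_\infty$ and converges $P$-a.e.\ to $\int F\,d\mu_\om$, so dominated convergence yields $\int F\,d\mu=\int_\Om\big(\int F\,d\mu_\om\big)\,dP(\om)$ for every $F$, which is exactly $\int\mu_\om\,dP(\om)=\mu$. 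The whole argument is routine once the martingale convergence theorem is in hand; the only mildly delicate point is the passage, via separability of $C(X)$ and compactness of $P(X)$, from almost sure convergence for each individual $F$ to the existence and measurability of the limiting measure $\mu_\om$.
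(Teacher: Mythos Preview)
Your argument is correct and is precisely the standard proof the paper is pointing to: the paper does not spell out a proof but simply records that the harmonic function $f_F(g)=\int F(gx)\,d\mu(x)$ yields a bounded martingale $f_F(\eta_n)$ and cites \cite{F1} for the conclusion via the martingale convergence theorem. Your write-up just fills in the routine details (separability of $C(X)$, Riesz representation, dominated convergence for the barycenter identity) that the paper leaves implicit.
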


The measures $\mu_\om$ are the {\em conditional measures} of the 
$m$-system ${\bf{X}} $. We call the $m$-system ${\bf{X}}$,
{\em $m$-proximal} (or a {\em boundary} in the terminology of \cite{F1})
if $P$-a.s. the conditional measures $\mu_\om\in M(X)$ are point masses. 
It can be shown that this property does not depend on the topological model
chosen for $\Xb$.
Clearly a factor of a proximal system is proximal as well.
There exists a unique $m$-stationary system $\Pi(G,m)$, called the {\em Poisson boundary}
of $(G,m)$, which is a maximal boundary.  Thus an $m$-stationary system is $m$-proximal
if and only if it is a factor of $\Pi(G,m)$.
For more details and basic results concerning general $m$-stationary dynamical systems and, 
in particular, $m$-proximal systems we refer to \cite{FG1} and \cite{FG2}. 
We remind the reader that every $m$-proximal stationary system is SAT, \cite[Corollary 2.4]{J}
(see also \cite[Proposition 3.7]{FG2}).
For an alternative approach to the Poisson boundary see the seminal work of 
Kaimanovitch and Vershik \cite{KV}.

The following theorem is proved by Bj\"{o}rklund in \cite[Theorem 3.1]{Bj}.
As indicated by him a slightly different proof, based on the fact that WAP systems are stiff
(see \cite{FG1}), is available and is given below.

\begin{defn}
A measure preserving $G$-system $\Xb = (X,\mathcal{B},\mu,G)$ is said to have
discrete spectrum if $L_2(\mu)$ decomposes as a direct sum of finite dimensional
invariant subspaces. 
\end{defn}

Recall that an ergodic measure preserving system has discrete spectrum 
if and only if
the group $\{U_g : g \in G\}$, where $U_g$ is the unitary Koopman operator
on $L_2(\mu)$ defined by $U_gf(x) = f(g^{-1}x)$, is precompact in the 
strong operator topology (see e.g. \cite[Section 3.1]{G}).

\begin{defn}
A topological dynamical system $(X,G)$ (i.e. $X$ is a compact space
and $G$ acts on $X$ via a continuous homomorphism, say $\phi$, of $G$ into the group
$\Homeo(X)$ of self homeomorphisms of $X$ equipped with the uniform
convergence topology) is called {\em weakly almost periodic (WAP)}
if the closure of $\{\phi(g) : g \in G\} \subset X^X$, in the pointwise
convergence topology on $X^X$ (this closure is called the {\em enveloping
semigroup} of $(X,G)$), consists of continuous maps.
\end{defn}

\begin{thm}
Let $m$ be a non-degenerate spread out and symmetric probability measure on $G$.
An $m$-stationary system $\Xb=(X,\Bcal,\mu,G)$ is WM (as a nonsingular $G$-system)
iff it does not admit a nontrivial measure preserving factor which has discrete spectrum.
\end{thm}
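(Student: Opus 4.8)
The plan is to prove the two implications separately, reducing ``WM'' to ``EUC'' via Theorem~\ref{main} and exploiting the result of Furstenberg and Glasner \cite{FG1} that a WAP compact metric $G$-system is $m$-stiff: every $m$-stationary probability measure on it is $G$-invariant. The direction WM $\imp$ (no nontrivial measure preserving discrete spectrum factor) is routine. If $\Xb$ is WM it is ergodic and, by Theorem~\ref{imp}, $L_\infty$-WM. Given a nontrivial measure preserving factor $\pi:\Xb\to\Yb=(Y,\Ccal,\nu,G)$ with discrete spectrum, $\Yb$ is nontrivial, so $L_2(\nu)$ contains a finite dimensional $G$-invariant subspace $V$ with a nonconstant element; $\Yb$ being ergodic (a factor of an ergodic system), the computation in the proof of Theorem~\ref{imp} (the $G$-invariant function $\sum|f_i|^2$ is a.e.\ constant) shows $V\subset L_\infty(\nu)$, and then $\pi^{*}V\subset L_\infty(\mu)$ is a nontrivial finite dimensional $G$-invariant subspace, contradicting $L_\infty$-WM.

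For the converse I would prove the contrapositive: assuming $\Xb$ is not WM, I will produce a nontrivial measure preserving discrete spectrum factor. If $\Xb$ is not ergodic, an invariant set of intermediate measure gives the $2$-point factor with the trivial $G$-action, which is measure preserving and obviously has discrete spectrum; so assume $\Xb$ ergodic. By Theorem~\ref{main} it is not EUC, so fix a nonconstant Borel equivariant $\phi:X\to H$ with $G$ acting on the separable Hilbert space $H$ by unitaries $\{U_g\}$; ergodicity makes $\|\phi(\cdot)\|$ a nonzero constant, so after rescaling $\phi$ maps into the unit sphere. Put $\nu:=\phi_*\mu$ and view it on the closed unit ball $B\subset H$ equipped with the weak topology --- a compact metrizable space on which $G$ acts continuously by homeomorphisms (strong continuity of the representation). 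This action is WAP, since every element of its enveloping semigroup is the restriction to $B$ of a linear contraction on $H$, hence weakly continuous; and $m*\nu=\nu$ follows from $m*\mu=\mu$ together with the a.e.\ equivariance of $\phi$. The stiffness of WAP systems \cite{FG1} then gives that $\nu$ is $G$-\emph{invariant}.

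It remains to extract discrete spectrum. Consider the positive operator $S$ on $H$ with $\langle Sh,h'\rangle=\int_B\langle v,h\rangle\overline{\langle v,h'\rangle}\,d\nu(v)$; its trace equals $\int_B\|v\|^2\,d\nu=1$, so $S$ is trace class, and the $G$-invariance of $\nu$ forces $S$ to commute with every $U_g$. Hence $H$ is the orthogonal sum of $\ker S=(\overline{\spann}\,\supp\nu)^{\perp}$ and the finite dimensional $G$-invariant eigenspaces $E_j$ of $S$ for nonzero eigenvalues. Since $\phi$ takes values in $\supp\nu$ a.e.\ and is nonconstant, it equals $\sum_j P_j\circ\phi$ a.e.\ and so some component $\phi_{j_0}:=P_{j_0}\circ\phi:X\to E_{j_0}$ is a nonconstant equivariant map into a finite dimensional Hilbert space with unitary $G$-action; then $\nu_0:=(\phi_{j_0})_*\mu=(P_{j_0})_*\nu$ is $G$-invariant, so $\phi_{j_0}$ exhibits $(E_{j_0},\nu_0,G)$ as a nontrivial measure preserving factor of $\Xb$. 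Finally, letting $K\subset U(E_{j_0})$ be the compact closure of $\{U_g|_{E_{j_0}}\}$, the measure $\nu_0$ is $K$-invariant, the Koopman representation of $K$ on $L_2(\nu_0)$ is strongly continuous, and Peter--Weyl decomposes it into finite dimensional $K$-invariant --- hence $G$-invariant --- subspaces; thus this factor has discrete spectrum, completing the argument.

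The genuinely load-bearing step, and the one I expect to require the most care, is the passage from the $m$-stationary measure $\nu$ to an honestly $G$-invariant one, i.e.\ the appeal to stiffness of WAP systems from \cite{FG1}: this is exactly what makes the constructed factor measure preserving, and it is the only point at which the standing assumptions on $m$ enter. The remaining verifications --- that the weak-ball action is a WAP topological system, and that the covariance-operator argument goes through (measurability of the pushforwards, joint continuity of the action on $B$, strong continuity of the $K$-Koopman representation, applicability of Peter--Weyl) --- I would treat as brief routine remarks.
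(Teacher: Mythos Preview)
Your argument is correct and shares with the paper the decisive step: pushing $\mu$ forward along an equivariant map into the weak unit ball of a Hilbert space to get an $m$-stationary measure on a WAP system, and then invoking the Furstenberg--Glasner stiffness theorem \cite{FG1} to conclude the pushforward is $G$-invariant. The two proofs diverge before and after this step.

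For the easy direction the paper is more direct: if $\Yb$ is a nontrivial measure preserving discrete spectrum factor then $\Yb\times\Yb$ is not ergodic, hence neither is $\Xb\times\Yb$, so $\Xb$ is not WM. Your route through $L_\infty$-WM is correct but circuitous. For the hard direction, the paper starts from a concrete witness $\Yb$ to the failure of WM and takes $H=L_2(Y,\nu)$, whereas you invoke Theorem~\ref{main} to get an abstract $H$; these are equivalent entry points. The real difference comes after stiffness: the paper finishes in one line by citing the structure theory of WAP systems---in a topologically transitive WAP system the unique invariant measure has discrete spectrum---so the factor $(\supp\nu,\nu,G)$ itself already has discrete spectrum. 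Your covariance-operator argument (trace class $S$ commuting with the $U_g$, finite dimensional eigenspaces, then Peter--Weyl on the compact image in $U(E_{j_0})$) reproves a piece of this structure theory by hand. It is a pleasant and self-contained substitute for the WAP black box, and has the minor advantage of yielding a factor living in a finite dimensional space, but it is longer than simply quoting the known result.
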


\begin{proof}
Suppose first that $\Xb$ admits a nontrivial measure preserving factor 
$\Yb = (Y, \Ccal, \nu, G)$ which has discrete spectrum.
Then $\Yb \times \Yb$ is not ergodic and therefore also $\Xb \times \Yb$,
which naturally maps onto $\Yb \times \Yb$, is not ergodic.
Whence $\Xb$ is not WM.

Conversely if $\Xb$ is not WM  then there exists a nontrivial probability measure preserving
system $\Yb$ such that $\Xb \times \Yb$ is not ergodic.
Let $W \subset X \times Y$ be a measurable invariant subset of $\mu \times \nu$ measure
strictly between zero and one.
It is easy to check that the map $\phi : X \to L_2(Y, \nu);\ 
x \mapsto 1_{W_x}$, with
$W_x = \{y \in Y : (x,y) \in W\}$, is an equivariant, measurable, nontrivial map
from $X$ to $L_2(Y, \nu)$, where the action of $G$ on $L_2(Y, \nu)$ is via
the (unitary) Koopman representation $g \mapsto U_g, \ g \in G$. 
Let $\la = \phi_*(\nu)$ be the push-forward probability measure on $L_2(Y, \nu)$.

Clearly $\la$ is $m$-stationary and ergodic. 
By the ergodicity of $\la$ it follows that the invariant function $x \mapsto \|x\|$ is 
a constant $\la$ a.e. and we can therefore assume that $Z : = \supp(\la)$,
where the closure is taken with respect to the weak topology,
is contained in the weakly compact unite ball of $L_2(Y,\nu)$.
Next we note that the action $(Z, \la, \{U_g\}_{g \in G})$
is topologically a WAP system (see e.g. \cite[Sections 1.9 and 3.1]{G}) and then, applying \cite[Theorem 7.4]{FG1}, we conclude that the $\la$ is $G$-invariant. 
Now in a topologically transitive WAP system the invariant measure is unique and has discrete spectrum and our prof is complete.
\end{proof}

The assumption that $\Xb$ is an $m$-stationary system, and not merely a nonsingular one,
is really necessary as the following example shows.

\begin{exa}
{\em There are ergodic, conservative, nonsingular $\Z$-systems which are 
not WM yet do not admit a nontrivial measure preserving factor with discrete spectrum.}
Explicitly, consider a non-singular dyadic adding machine $(\Om,\mu_p,T)$ for $1/2 \not = p \in (0,1)$.
Here $\Om = \{0,1\}^\N$, $T\om = \om +\ch$ with $\ch = (1,0,0, \dots)$ and
$\mu_p = \{p, 1 - p\}^\N$ .
This nonsingular $\Z$-system is conservative, ergodic, and has no absolutely continuous invariant measure (see \cite{Aa} pages 29-31).
Clearly the product system$(\Om,\mu_p) \times (\Om, \mu_{1/2})$ is not ergodic, as
$\Om \times \Om$ decomposes into the disjoint union of $T \times T$-invariant graphs
$\Ga_\eta = \{(\om, \om + \eta) : \om \in \Om\}, \ \eta \in \Om$.
Since the system $(\Om, \mu_{1/2},T)$ is measure preserving it follows
that $(\Om, \mu_p,T)$ is not WM.
However, $(\Om, \mu_p,T)$ admits no nontrivial measure preserving factors. 

To see this assume to the contrary that
$\pi : (\Om, \mu_p,T) \to \Yb = (Y, \nu, S)$ is a measure preserving factor.
We consider two cases:

Case I : The system $\Yb$ has the property that 
$S^{2^n}$ acts ergodically for every $n \ge 1$.

In this case, for every $n \ge 1$ each of the $2^n$ ergodic components of $T^{2^n}$
is mapped onto $Y \pmod 0$. This however will contradict the fact that 
$\pi$ is
a homomorphism of nonsingular systems, unless $\Yb$ is trivial.

Case II : There is some $n \ge 1$ for which $(Y,\nu,S^{2^n})$ is not ergodic. Consider the smallest such $n$. 
Denoting $R = S^{2^{n-1}}$ we have that
$R$ acts ergodically and that $R^2 = S^{2^n}$. This implies that $S^{2^n}$
has exactly two ergodic components, say $A$ and $RA$. 
As $S$ commutes with $S^{2^n}$ we have that 
$SA$ is either $A$ or $RA$. Since $S$ is ergodic the first possibility is
ruled out and we have $SA = RA$, whence  $S^2 A = A$.
The conclusion is thus that already $S^2$ is not ergodic, with two
ergodic components $A$ and $SA$ each of $\nu$ measure $1/2$.
Now, the two $T^2$ ergodic components, say $B = [0]$ and $TB = [1]$,
whose $\mu$ measures are $p$ and $1-p$ are
mapped by $\pi$ onto $A$ and $SA$.
As $\pi_*(\mu_p) = \nu$, this implies that $p = 1 - p = 1/2$,
contradicting our assumption.
Thus this case is impossible and our proof is complete.
\end{exa}

The next result 
is essentially due to Kaimanovich \cite{Ka3}.
He proves part (2) for the Poisson boundary $\Pi(G,m)$, but then the result holds for
all its factors as well. Of course (2) implies (1).
(See also \cite{AL}.)

\begin{thm}\label{x^2}
Let $m$ be a non-degenerate spread out and symmetric probability measure on $G$.
Let $\mu$ be an $m$-stationary probability measure on $X$ such that
the $m$-stationary system $\Xb=(X,\Bcal,\mu,G)$ is $m$-proximal. 
\begin{enumerate}
\item
The nonsingular system $\Xb$ is doubly ergodic.
\item 
The product system $\Xb^2$ is EIC.
\end{enumerate} 
\end{thm}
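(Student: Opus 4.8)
The plan is to prove the stronger statement (2) and deduce (1) as an immediate consequence, since a factor map $\Xb^2 \to \Xb$ combined with EIC of $\Xb^2$ forces $\Xb$ to have no nonconstant metric coefficients of the diagonal action, which is exactly DE by Theorem \ref{imp}'s first implication being reversible at this level — more directly, (2) gives EIC for $\Xb^2$, hence in particular $\Xb^2$ is ergodic, which is the definition of DE for $\Xb$. So the whole burden is showing $\Xb^2$ is EIC. Here is where I would bring in $m$-proximality: since $\Xb$ is $m$-proximal, $P$-a.s. the conditional measures $\mu_\om$ are point masses $\delta_{b(\om)}$, where $b : \Om \to X$ is a measurable map, and the pushforward of $P$ under $b$ is $\mu$ (by the barycenter equation $\int \mu_\om\,dP(\om) = \mu$). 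The key structural fact, which is the strengthened form of Kaimanovich's theorem alluded to in the introduction, is that the random walk ``contracts'' $\Xb^2$ onto the diagonal in a measurable sense: for $P$-a.e. $\om$ the measures $\eta_n\mu \times \eta_n\mu$ converge to $\delta_{b(\om)} \times \delta_{b(\om)}$, i.e. to a point mass on the diagonal of $X \times X$.

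The core argument would then run as follows. Suppose for contradiction that $\psi : X \times X \to Z$ is a nonconstant Borel equivariant map into a separable metric space $(Z,d)$ on which $G$ acts by isometries; as in the proof of Proposition \ref{SAT-EIC} we may assume $Z = \cls\,\psi(X\times X)$ and $\supp(\psi_*(\mu\times\mu)) = Z$. Push the stationarity forward: $\psi_*(\mu\times\mu)$ is an $m$-stationary measure $\la$ on $Z$, and because $\psi$ is equivariant and $\Xb^2$ carries the product of conditional measures as its own conditional measures, the conditional measures of $(Z,\la,G)$ are $\la_\om = \psi_*(\mu_\om \times \mu_\om)$. Now invoke the contraction: $\mu_\om \times \mu_\om \to \delta_{(b(\om),b(\om))}$ weakly, so $\la_\om = \psi_*(\mu_\om\times\mu_\om)$ converges to $\delta_{\psi(b(\om),b(\om))}$ — here I must be careful, since $\psi$ is only defined a.e., but the diagonal $\Delta \subset X\times X$ has $\mu\times\mu$-measure zero in general; to handle this one should work not with $\psi$ evaluated at the diagonal but argue that along the random walk the diameters $\mathrm{diam}(\eta_n \cdot \supp(\psi_*(\mu\times\mu)))$, which equal $\mathrm{diam}(\supp\la)$ since $G$ acts by isometries, are forced to shrink. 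Concretely: for $P$-a.e. $\om$, $\eta_n^{-1}$ applied to $\la$ gives back $\la$ in distribution (isometry invariance of diameter) while $(\eta_n\mu)\times(\eta_n\mu)$ concentrates; combining these, the isometry-invariance of $d$ under $G$ forces $\mathrm{diam}(\supp \la) = 0$, hence $\la$ is a point mass, hence $\psi$ is constant — contradiction.

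The main obstacle, and the step I would spend the most care on, is making the ``contraction onto the diagonal'' statement precise and legitimately applying it to the pushforward system on $Z$. The subtlety is that the equivariant map $\psi$ is only defined up to $\mu\times\mu$-null sets, so one cannot literally evaluate it on the diagonal or on the supports of the conditional measures $\mu_\om\times\mu_\om$ (which live on a null set once $\om$ is fixed). The clean way around this is to avoid pointwise evaluation entirely: use that $G$ acts on $(Z,d)$ by isometries to transfer the problem to a statement about the constancy of the function $(x,x') \mapsto d(\psi(x),\psi(x'))$ under the diagonal $G$-action — this function is $G$-invariant hence $\mu\times\mu$-a.e. constant equal to some $t_0 \ge 0$ (exactly as in the $DE \imp EIC$ proof) — and then show $t_0 = 0$ by testing against the conditional measures: $t_0 = \int\int d(\psi(x),\psi(x'))\,d\mu_\om(x)d\mu_\om(x')$ for a.e. $\om$, and since $\mu_\om = \delta_{b(\om)}$ the integrand is $d(\psi(b(\om)),\psi(b(\om))) = 0$. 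This last chain is where one must check the integral identity — it follows from $\mu\times\mu = \int (\mu_\om\times\mu_\om)\,dP(\om)$, which itself needs the statement that the conditional measures of the product $m$-system are the products of the conditional measures, a fact one should either cite from \cite{FG2} or verify by a short martingale argument. Once $t_0 = 0$, constancy of $\psi$ follows since $\supp(\la) = Z$ as in Proposition \ref{SAT-EIC}, proving (2), and (1) is then immediate.
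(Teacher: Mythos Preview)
Your proposal has a genuine error at the heart of the ``clean'' argument. The identity you invoke, $\mu\times\mu = \int \mu_\om \times \mu_\om\, dP(\om)$, is \emph{false}. The product measure $\mu\times\mu$ is not $m$-stationary for the diagonal action in general, so there is no conditional-measure decomposition of $\mu\times\mu$ to speak of. If you compute the right-hand side directly you get, in the $m$-proximal case, $\int \delta_{(b(\om),b(\om))}\,dP(\om)$, which is the pushforward of $\mu$ under the diagonal embedding $x\mapsto(x,x)$ --- a measure concentrated on $\Del(X)$, not $\mu\times\mu$. So the step ``$t_0 = \iint d(\psi(x),\psi(x'))\,d\mu_\om(x)\,d\mu_\om(x')$ for a.e.\ $\om$'' has no justification; you are integrating the (a.e.-defined) function $d(\psi(\cdot),\psi(\cdot))$ against the wrong measure. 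This is not a technicality to be patched by a citation to \cite{FG2}: the formula is simply wrong.

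There is a second, independent gap. You assert that the $G$-invariant function $(p,q)\mapsto d(\psi(p),\psi(q))$ on $X^2\times X^2$ is constant ``exactly as in the $DE\Rightarrow EIC$ proof''. But that proof uses the \emph{hypothesis} of double ergodicity to conclude constancy of an invariant function on the square. Here the analogous conclusion would require ergodicity of the diagonal $G$-action on $X^4$, i.e.\ that $\Xb^2$ is DE --- which is strictly stronger than the EIC of $\Xb^2$ you are trying to establish (and is in fact false in the examples of Sections 5 and 6). The earlier ``diameter-shrinking'' sketch does not rescue this either: since $\psi$ is only Borel, weak convergence $\eta_n(\mu\times\mu)\to\delta_{(b(\om),b(\om))}$ gives no control over $\psi_*(\eta_n(\mu\times\mu))$.

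The paper takes a completely different route. Rather than working with conditional measures on $X\times X$, it lifts the problem to the bilateral Bernoulli shift on the path space $(G^\Z,m^\Z)$ and applies a metric-space strengthening of Kaimanovich's rigidity lemma (Theorem~\ref{Ka6}): any measurable map $f$ from the bilateral shift to a separable metric space satisfying $f(T\om)=\pi(x_1)f(\om)$ with $\pi(x_1)$ an isometry depending only on one coordinate must be a.e.\ constant. The passage from this lemma to Theorem~\ref{x^2} then follows Kaimanovich's original derivation of \cite[Theorem~17]{Ka3} from \cite[Theorem~6]{Ka3}. The point is that the Bernoulli structure of the path space does the ergodic-theoretic work that the product $\Xb^2$ itself cannot provide.
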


Theorem \ref{x^2} is, in fact, a generalization of Kaimanovich' theorem.
Actually the notion of EIC is not even defined in Kaimanovich' paper \cite{Ka3}.
However, his proof 
can be easily modified to prove 
a stronger statement when the target is a separable metric space $(Z,d)$.

In order to understand the way this is done we first recall \cite[Theorem 6]{Ka3}.

\begin{thm}\label{Ka6}
Let $T : (\Om, \mu) \to (\Om, \mu)$ be the bilateral Bernoulli shift over a probability 
space $(X, \nu)$ (i.e. $\Om = X^\Z,\  \mu = \nu^\Z$ and for
$\om = ( \dots, x_{-1}, \dot{x}_0, x_1, \dots),\ (T(\om))_i = x_{i+1}$). 
If $E$ is a separable Banach space, and 
$f : \Om \to E$ and $\pi : X \to \Iso(E)$ are measurable maps such that a.e.
$f(T\om) = \pi (x_1) f(\om)$, then $f$ is a.e. a constant.
\end{thm}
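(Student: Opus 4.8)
The plan is to reduce the statement to an ergodicity argument for a skew-product transformation built over the Bernoulli shift, and to exploit the fact that the isometry group of a separable Banach space, acting on a separable closed subspace generated by the image of $f$, can be handled through its action on the unit ball. First I would observe that by replacing $E$ with the closed linear span of $\{f(\om):\om\in\Om\}$ (which is separable) we may assume $E$ is separably generated by the values of $f$; the isometries $\pi(x_1)$ need not preserve this subspace, but the cocycle relation $f(T\om)=\pi(x_1)f(\om)$ shows that the orbit $\{\pi(x_1)\pi(x_2)\cdots\pi(x_n)f(\sigma^n\om)\}$ stays inside it, so after a further reduction the relevant dynamics take place on a fixed separable space. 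Next I would form the skew product $\widehat T$ on $\Om\times E$ (or on $\Om$ times the unit ball of $E$ in the weak topology) by $\widehat T(\om,v)=(T\om,\pi(x_1)v)$, and note that the graph of $f$, namely $\{(\om,f(\om))\}$, is $\widehat T$-invariant.

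The key step is to show that $\|f(\om)\|$ is a.e. constant — this is immediate from the cocycle relation since $\pi(x_1)$ is an isometry, so $\|f(T\om)\|=\|f(\om)\|$ and ergodicity of the Bernoulli shift $T$ gives $\|f\|\equiv c$. If $c=0$ we are done, so assume $c=1$ and $f$ takes values in the unit sphere. Then I would use the two-sided structure of the Bernoulli shift in an essential way: the value $f(\om)$ is measurable with respect to the full $\sigma$-algebra, and by the cocycle identity applied in reverse, $f(\om)=\pi(x_1)^{-1}f(T\om)$, so $f(\om)$ is determined by $(x_1,x_2,\dots)$ together with $f$ restricted to coordinates $\ge 1$; iterating, $f(\om)$ is measurable with respect to the future $\sigma$-algebra $\sigma(x_1,x_2,\dots)$, and symmetrically (running $T^{-1}$) with respect to the past. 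By the Kolmogorov zero–one law / triviality of the tail, together with the standard martingale argument — $\E(f\mid x_1,\dots,x_n)$ converges to $f$, while independence of the increments forces these conditional expectations to be constant in an appropriate sense — one concludes $f$ is a.e. constant. Concretely: the conditional expectation of $f(\om)$ given the coordinates with index in $[1,n]$ is, by the cocycle relation, expressible as $\pi(x_1)\cdots\pi(x_n)\,\E(f(\sigma^n\om))$, and the right factor $\E(f(\sigma^n\om))$ is a fixed vector $w$ independent of everything; letting $n\to\infty$ and using the backward martingale convergence on the reversed filtration yields that $f(\om)=\lim_n \pi(x_1)\cdots\pi(x_n)w$ exists and, because the same limit is obtained after shifting, it must be $T$-invariant, hence constant.

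The main obstacle I anticipate is the lack of weak compactness and of any inner-product structure in a general separable Banach space: one cannot simply decompose into irreducibles or pass to a unitary model as in Theorem \ref{main}. The argument must therefore be purely measure-theoretic, relying on the Bernoulli (i.i.d.) structure and the two-sidedness of the shift rather than on spectral theory. The delicate point is justifying the interchange of the limit with the isometries and verifying that the "fixed vector" $w=\E(f(\sigma^n\om))$ is genuinely independent of $n$ — this uses stationarity of $\mu=\nu^\Z$ — and then checking that the resulting limit function is strictly $T$-invariant and not merely invariant modulo the cocycle. Once that is in place, ergodicity of $T$ closes the argument. I would present it by first isolating a lemma: \emph{if $g:\Om\to E$ is measurable, bounded, and satisfies $g(T\om)=\pi(x_1)g(\om)$ a.e., then $\E(g\mid \Fcal_{[1,n]})$ is a.e. equal to $\pi(x_1)\cdots\pi(x_n)\,\E_\nu^{\otimes\N}(g)$}; then apply backward martingale convergence and the zero–one law.
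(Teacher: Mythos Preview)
First, note that the paper does not give its own proof of Theorem~\ref{Ka6}: it is quoted from \cite[Theorem~6]{Ka3}, and the only information the paper supplies about Kaimanovich's argument is that it takes place in $L_1(\Om,\mu,E)$ with norm $\vertiii{f}=\int\|f(\om)\|\,d\mu(\om)$. So the comparison below is with Kaimanovich's proof as alluded to there.

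Your outline has a genuine gap at the key lemma. You assert that $\E(f\mid\Fcal_{[1,n]})=\pi(x_1)\cdots\pi(x_n)\,\E(f)$, obtained from $f(\om)=\pi(x_1)^{-1}\cdots\pi(x_n)^{-1}f(T^n\om)$ by ``integrating out'' $f(T^n\om)$. But $f(T^n\om)$ is \emph{not} independent of $x_1,\dots,x_n$: since $(T^n\om)_i=x_{i+n}$, the nonpositive-index coordinates of $T^n\om$ are exactly the $x_j$ with $j\le n$, so $f(T^n\om)$ may depend on $x_1,\dots,x_n$ through them, and $\E(f(T^n\om)\mid\Fcal_{[1,n]})\ne\E(f)$ in general. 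For the same reason, your earlier claim that iterating $f(\om)=\pi(x_1)^{-1}f(T\om)$ already proves $f$ is future-measurable is circular: each iterate still carries the full past dependence of $f$. The repair --- and this is precisely the $\vertiii{\cdot}$ device the paper points to --- is to replace finite windows by the one-sided filtration $\Fcal_n=\sigma(x_i:i\le n)$. With $f_n=\E(f\mid\Fcal_n)$ one has $T^{-1}\Fcal_n=\Fcal_{n+1}$ and, for $n\ge 0$, $f_n\circ T=\pi(x_1)\,f_{n+1}$; since $\pi(x_1)$ is an isometry and $T$ preserves $\mu$, this gives $\vertiii{f-f_n}=\vertiii{f-f_{n+1}}$. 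Martingale convergence then forces this constant to be $0$, so $f$ is $\Fcal_0$-measurable. The symmetric argument with $\sigma(x_i:i\ge n)$ shows $f$ is $\sigma(x_i:i\ge 1)$-measurable, and independence of past and future in the Bernoulli shift now makes $f$ a.e.\ constant. Your skew-product and zero--one heuristics become unnecessary once this $L_1$-monotonicity step is in place.
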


Note the crucial assumption that $\pi$ depends only on the first coordinate $x_1$
of $\om$.
We claim that the same statement holds when we replace $E$ by a 
separable metric space $(Z,d)$.
In fact, all one needs to do is to assume (with no loss in generality)
that the metric $d$ is bounded, and then to replace the (linear) space 
$L_1(\Om,\mu,E)$ with norm
$$
\vertiii{f} =
 \int \|f(\om)\| \,d\mu(\om),
$$
which is used by Kaimanowich in his proof of Theorem \ref{Ka6},
by the (non-linear) space 
$M_1(\Om,\mu,Z)$ of all measurable functions $f : \Om \to Z$ with the metric
$$
D(f_1,f_2) = \int d(f_1(\om),f_2(\om))\, d\mu(\om).
$$
Now from this stronger version of  
\cite[Theorem 6]{Ka3}, theorem \ref{x^2} follows exactly as
\cite[Theorem 17]{Ka3} is derived from \cite[Theorem 6]{Ka3}.


\begin{exa}
Consider again the nonsingular SAT system $(\R,\mu,\Ga)$ described in Example \ref{SATnDE} above.
We claim that {\em there is no non-degenerate and symmetric probability measure $m$
on $\Ga$ which admits an $m$-stationary probability measure, say $\nu$, in the class of $\mu$ (i.e. equivalent to 
Lebesgue measure on $\R$)}.  In fact if $m$ on $G$ and $\nu$ on $\R$ are such measures then,
by \cite[Proposition 2.2]{J}, the system $(\R,\nu,\Ga)$ would be $m$-proximal, hence, by 
Theorem \ref{x^2}, this will ensure that the systems $(\R,\nu,\Ga)$ and therefore also
$(\R,\mu,\Ga)$ are DE, in contradiction to the claim in Example \ref{SATnDE}.
\end{exa}

\br

\section{The Poisson boundary for the free group $\mathbb{F}_2$}
\begin{exa}\label{exa,F2}
Let $G$ be the free group on two
generators, $G=\mathbb{F}_2=\langle a,b \rangle$, and $m$ the probability measure $m=\frac
14(\del_a+\del_b+\del_{a^{-1}}+\del_{b^{-1}})$.
Evidently $m$ is spread out, non-degenerate, and symmetric.
Let $Z$ be the space of right infinite reduced words on the letters
$\{a,a^{-1},b,b^{-1}\}$. $G$ acts on $Z$ by concatenation on the left
and reduction. Let $\eta$ be the probability measure on $Z$ given by
$$
\eta(C(\ep_1,\dots,\ep_n))=\frac1{4\cdot 3^{n-1}},
$$
where for $\ep_j\in\{a,a^{-1},b,b^{-1}\}$,\
$C(\ep_1,\dots,\ep_n)=\{z\in Z:z_j=\ep_j,\ j=1,\dots,n\}$. The
measure $\eta$ is $m$-stationary and the $m$-system ${\bf{Z}}
=(Z,\eta,G)$ is $m$-proximal. In fact ${\bf{Z}}$ is the Poisson
boundary $\Pi(\mathbb{F}_2,m)$. In particular then the system $\Zb$ is SAT
(see e.g. \cite{FG2}, Proposition 3.7), and by \cite{Ka1} it is doubly ergodic.
It now follows, by Theorem \ref{x^2}, that $\Xb^2$ is EIC.

It is not hard to see that if we let $\mathbb{F}_2$ act on the unit circle $\T =\{z \in \C : |z|=1\}$
via two appropriately chosen Moebius transformations $T_a$ and $T_b$, then the quotient
system $\Xb=(X,\mu,G)$, 
where $X = \mathbb{P} \cong \T/\{\pm 1\}$ is the projective line and 
$\mu$ denotes the image of Lebesgue's measure on $\T$, is isomorphic to $\Zb = (Z,\eta,G)$.

By choosing four disjoint intervals in $\mathbb{P}$ we can easily see that the system
$\Xb^2$ is not doubly ergodic (i.e. the diagonal $G$-action on $X \times X \times X \times X$
is not ergodic). Namely, if $A, B, C, D$ are disjoint arcs ordered counterclockwise
on the circle then, for any Moebius transformation $T$, the arcs 
$T A, T B, TC, TD$ are ordered either clockwise or counterclockwise. 
Thus e.g. 
$$
(T \times T \times T \times T) (A \times B \times C \times D) 
\cap (A \times C \times B \times D)  \not = \emptyset,
$$
can not be achieved.
Another way of proving this statement is to observe that the cross-ratio relation 
is preserved by the diagonal $G$-action on $X^4$.

As a conclusion we have:

\begin{prop}\label{F2exa}
The $G$-system $\Xb^2$ is EIC but not DE.
\end{prop}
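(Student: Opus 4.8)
The plan is to establish the two halves of the proposition independently; neither goes beyond what is already in place.

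\emph{EIC.} Here I would simply invoke Theorem \ref{x^2}. The system $\Zb=(Z,\eta,G)$ is the Poisson boundary $\Pi(\mathbb{F}_2,m)$, hence $m$-proximal, and since $\Xb$ is isomorphic to $\Zb$ it is $m$-proximal too. The measure $m=\frac14(\del_a+\del_b+\del_{a^{-1}}+\del_{b^{-1}})$ is non-degenerate, spread out and symmetric, so the hypotheses of Theorem \ref{x^2} are satisfied by $\Xb$, and part (2) of that theorem gives at once that $\Xb^2$ is EIC. (Double ergodicity of $\Xb$ itself --- Kaimanovich's theorem, equivalently Theorem \ref{x^2}(1) --- is strictly weaker than what is needed here; the content of the proposition is precisely that $\Xb$ is DE while $\Xb^2$ is not.)

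\emph{Not DE.} By definition $\Xb^2$ is DE exactly when the four-fold diagonal $G$-action on $X^4$ is ergodic, so it suffices to exhibit a $G$-invariant measurable subset of $X^4$ of $(\mu\times\mu\times\mu\times\mu)$-measure strictly between $0$ and $1$. I would use the cyclic-order observation recorded in the text: fix four pairwise disjoint nonempty open arcs $A,B,C,D\subset X$ occurring in this counterclockwise cyclic order, and put $P=A\times B\times C\times D$ and $Q=A\times C\times B\times D$, both of positive measure. Since every Moebius self-map of the circle is monotone, it preserves or reverses cyclic order; hence for each $g\in G$ the arcs $gA,gB,gC,gD$ are again cyclically ordered, in the order $gA,gB,gC,gD$ or its reverse. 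Consequently the four coordinates of any point of $gP$ occur along the circle in the cyclic order $1,2,3,4$ or its reverse, whereas those of any point of $Q$ occur in the cyclic order $1,3,2,4$; these are incompatible, so $gP\cap Q=\emptyset$ for every $g\in G$. As $G=\mathbb{F}_2$ is countable, $E:=\bigcup_{g\in G}gP$ is a measurable $G$-invariant set with $0<(\mu\times\mu\times\mu\times\mu)(P)\le(\mu\times\mu\times\mu\times\mu)(E)\le 1-(\mu\times\mu\times\mu\times\mu)(Q)<1$, so $X^4$ is not ergodic and $\Xb^2$ is not DE.

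The alternative, more conceptual route is to observe --- as the text also notes --- that the cross-ratio is a $G$-invariant measurable function $X^4\to\R\cup\{\infty\}$ (defined off the null locus where two of the four relevant coordinates coincide), invariant because the cross-ratio is a projective invariant and $G$ acts on $X$ through Moebius transformations; being manifestly non-constant it shows that the diagonal action on $X^4$ is not ergodic. The only delicate points in either route are bookkeeping: that the chosen realization of $\mathbb{F}_2$ indeed lies in the projective group under which the cross-ratio is invariant (so that it is genuinely a $G$-invariant function on $X=\T/\{\pm1\}$), and that ``$\Xb^2$ is DE'' unwinds to ergodicity of the four-fold rather than the two-fold diagonal action. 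I expect no real obstacle, the substance of both halves being already contained in Theorem \ref{x^2} and in the cyclic-order remark.
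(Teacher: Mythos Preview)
Your proposal is correct and follows essentially the same approach as the paper: EIC of $\Xb^2$ is obtained directly from Theorem \ref{x^2}, and failure of DE is shown via the cyclic-order argument with four disjoint arcs (the paper also mentions the cross-ratio alternative). Your treatment is slightly more explicit in actually writing down the invariant set $E=\bigcup_{g\in G}gP$ of intermediate measure, whereas the paper stops at the observation that $(g\times g\times g\times g)(A\times B\times C\times D)$ can never meet $A\times C\times B\times D$, leaving the passage to non-ergodicity implicit.
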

\end{exa}

\br

\section{A $\Z$-system which is  EIC but not DE}\label{Sec,Z}

\begin{prop}\label{Zexa}
There exists a $\Z$-system which is EIC but not DE.
\end{prop}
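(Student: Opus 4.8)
The goal is to produce a nonsingular $\Z$-action that is EIC (equivalently, by Proposition \ref{SAT-EIC} and Theorem \ref{x^2}, it would suffice to realize it as an $m$-proximal factor of a Poisson boundary, or directly as a SAT system, for some symmetric admissible $m$ on $\Z$) but whose Cartesian square fails to be ergodic. The strategy, as the section title and the introduction hint (``for $\Z$ the example arises from a standard planar random walk''), is to let $\Z$ act not by its standard (amenable, hence never SAT) self-action, but through a \emph{planar random walk}: embed the relevant dynamics in the boundary action of a planar random walk and then show a single generator already sees enough of the boundary. Concretely, I would take $\mu$ a finitely supported symmetric measure on $\Z^2$ whose support generates $\Z^2$ and whose associated random walk has a nontrivial Poisson boundary — e.g. one should pick a planar walk whose increments have a nonzero ``rotation'' component so that the harmonic measure lives on a circle-like boundary $(Z,\eta)$ on which $\Z^2$ (or rather a rank-one quotient acting through Möbius-type maps) acts with the cross-ratio preserved, exactly as in Example \ref{exa,F2}. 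The cross-ratio invariant on $Z^2\times Z^2$ is then the obstruction to DE.

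**Key steps.** (1) Fix a symmetric admissible probability measure $m$ on $\Z$ (this is where one must be careful: $\Z$ is abelian, so by Proposition in \S3 no \emph{faithful} $\Z$-action through the standard action can be SAT; the point is that the $\Z$-action here is \emph{not} the regular one — $\Z$ acts through a homomorphism into the Möbius group, i.e. through a planar random-walk boundary, and the kernel is nontrivial as it must be). So more precisely: take the planar random walk $(\Z^2, m')$ with boundary $(Z,\eta)=\Pi(\Z^2,m')$ realized as a circle with an action by Möbius transformations $\{T_n : n\in\Z^2\}$, pushed forward from a suitable embedding; then restrict attention to a cyclic subgroup $\langle T \rangle \cong \Z$ acting on $Z$, where $T = T_{v}$ for a well-chosen $v\in\Z^2$, and let $m$ be the image on this $\Z$ of $m'$ (one checks $m$ is symmetric, non-degenerate, and that $\eta$ remains $m$-stationary and $m$-proximal — this uses that proximality passes to the sub-walk and to factors). (2) Invoke Proposition \ref{SAT-EIC}: an $m$-proximal system is SAT, hence EIC. (3) For the failure of DE, repeat the cross-ratio / four-arcs argument of Example \ref{exa,F2}: choose four disjoint arcs $A,B,C,D$ on $Z$ in cyclic order; since every $T_n$ is a Möbius map it preserves cyclic order (or reverses it), so $(T\times T\times T\times T)(A\times B\times C\times D)$ can never meet $A\times C\times B\times D$, giving a nontrivial invariant set in $Z^4$ and hence a nontrivial invariant set in $Z^2\times Z^2$ — so $\Xb\times\Xb$ is not ergodic, i.e. $\Xb$ is not DE. Packaging the resulting $\langle T\rangle$-system as $(X,\mu,\Z)$ with $X=Z$, $\mu=\eta$ completes the proof.

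**Main obstacle.** The delicate point is Step (1): arranging a genuine \emph{symmetric} admissible measure $m$ on $\Z$ (viewed as the cyclic group acting on the circle via Möbius transformations arising from a planar walk) such that the circle with its harmonic measure really is the Poisson boundary of $(\Z,m)$ — or at least an $m$-proximal $m$-stationary system. Since $\Z$ is amenable, its Poisson boundary for the \emph{regular} action is trivial, so one cannot literally have a nontrivial boundary $\Z$-system with $\Z$ acting freely; the resolution is that here the relevant ``group'' acting on $Z$ is the image of $\Z$ in $\mathrm{PSL}_2(\R)$, and what one must verify is that $(Z,\eta)$ is $m$-proximal as a \emph{nonsingular} $\Z$-system, which by the theorem of Kaimanovich (Theorem \ref{x^2}) and the characterization ``$m$-proximal $\iff$ factor of $\Pi(G,m)$'' requires identifying the right $m$. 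The clean way around this is to start from the planar random walk on $\Z^2$ (whose boundary genuinely is a circle, cf. work on random walks on lattices in $\mathrm{PSL}_2(\R)$) and transport everything to $\Z$ via a suitable cyclic subgroup; verifying that the push-forward measure on $\Z$ is symmetric and that stationarity and proximality survive the restriction is the technical heart, and is exactly the content one expects the authors to carry out in the remainder of this section.
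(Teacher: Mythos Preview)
Your approach has a fatal gap. You attempt to produce the example as an $m$-proximal (hence SAT) $\Z$-system, but this is impossible: the paper's own proposition in \S3 states that abelian groups admit no nontrivial SAT systems, and this applies to \emph{every} nonsingular $\Z$-action, faithful or not. Equivalently, by Choquet--Deny the Poisson boundary of any admissible measure on $\Z$ (or on $\Z^2$) is trivial, so there are no nontrivial $m$-proximal $\Z$-systems to invoke Theorem~\ref{x^2} on. Your attempted workaround --- letting $\Z$ act through a homomorphism into $\mathrm{PSL}_2(\R)$ --- does not help: a single M\"{o}bius transformation generates a cyclic group whose action on the circle is never even ergodic (it is elliptic, parabolic, or hyperbolic, each with an obvious invariant set or invariant function), let alone EIC. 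The ``planar random walk'' mentioned in the introduction is not a hint toward a boundary construction.

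The paper's argument is entirely different and avoids SAT and stationarity altogether. The $\Z$-system is the \emph{shift on the two-sided path space} of a conservative ergodic countable-state Markov chain with infinite invariant measure and no periodic factor --- concretely, the simple symmetric random walk on $\Z^2$ (with a holding probability at the origin to kill parity). To prove EIC, suppose $f:Y\to Z$ is equivariant into a metric $G$-space, $f(Ty)=Uf(y)$ with $U$ an isometry. Fix a state $a_0$ and pass to the induced transformation on the cylinder $[a_0]$: this is a Bernoulli shift on the alphabet of return blocks, and the restriction $g$ of $f$ satisfies $g(Sx)=U^{k(x_0)}g(x)$ with $k(x_0)$ the length of the zeroth block. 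This is precisely the hypothesis of the metric-space version of Kaimanovich's Theorem~\ref{Ka6}, so $g$ is a.e.\ constant; the essential range of $f$ is then a countable $U$-orbit, which conservativity and aperiodicity force to be a single point. For the failure of DE, one simply notes that the Cartesian square of the planar walk is the simple random walk on $\Z^4$, which is transient, hence the product system is dissipative and in particular not ergodic.
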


\begin{proof}
Let $\{Y_n\}$ be a countable state Markov
chain on the state space $A$ which is conservative and
ergodic with an infinite invariant measure that has no
periodic factor. 
We claim that the corresponding shift (as a non-singular transformation) is EIC.
For the proof suppose that $f$ is a measurable map from
$Y = A^\Z$ to a separable metric space $(Z,d)$ such that
$$
f(Ty)=Uf(y)
$$
where $T$ is the shift and $U$ is an isometry of $Z$. 
Now fix a state $a_0$
in $A$ and look at the induced transformation on the cylinder set $[a_0]$.
This becomes a Bernoulli shift where the states are now all the
possible blocks $a_0,a_1,a_2,\dots,a_k$ that represent the states that
one visits before the next return to $a_0$. Let $B$ denote
this countable collection of blocks, and of course there
is a probability distribution on these blocks given by
the transition probabilities of the Markov chain.
If $S$ represents the induced transformation then $S$ is the Bernoulli shift
on $X = B^\Z$. Now if we call the restriction of $f$ to the cylinder set $[a_0]$,
$g$ then $g$ will satisfy:
$$
g(Sx)=U^{k(x_0)}g(x)
$$
where $k$ is the length of the block $x_0 \in B$. 
This is exactly the situation
of \cite[Theorem 6]{Ka3} adapted to isometries as in Section \ref{sec,m} 
(except for the inessential change that $x_1$ is replaced by $x_0$,
the $0$-th coordinate of $x$).
It follows that $g$ is a constant a.e. 
The essential range of $f$ is now a countable set on which $U$
will act as a permutation and, since the chain is assumed to be ergodic and conservative,
this set would have to be finite contradicting the fact that we assumed that the chain has no
periodic factor.

Now any such chain whose cartesian square is not conservative will give an example
of an EIC system which is not DE. 
For a simple example consider the simple symmetric planar random walk on $\Z^2$ with a positive probability of remaining at $(0,0)$. This is to eliminate the two point factor coming from the 
parity of the site. As is well known its cartesian square, which is the random walk on $\Z^4$,
is not recurrent, i.e. is not conservative. 
\end{proof}

\br

\section{$F_m$-proximality and mean proximality}
In this section we reconsider 
some notions of proximality introduced by Furstenberg in \cite{F}. 
Let $m$ be a  probability measure on $G$. For $n \ge 1$ set
$$
m_n =\frac1n (m + m^{(2)} + \cdots + m^{(n)}),
$$
where $m^{(j)} = m*m*\cdots *m$, ($j$-times).
Given a compact metric $G$-space $X$
we define the convolution operators $\A_n : C(X) \to C(X)$ by the formula
$$
\A_n f (x) = (m_n * f)(x) = \int f(gx) \, dm_n(g).
$$
We write $\A$ for $\A_1$.

The operator $\A$ is a {\em Markov operator} on $C(X)$; i.e. it is linear, positive
and satisfies $\A 1_X = 1_X$. Any Markov operator admits an invariant probability
measure, and it is called {\em uniquely ergodic} if there is only one invariant
probability measure (see \cite[Section 5.1]{Kr}).
The proof of the next theorem is almost verbatim the same as that 
of Theorem 4.9 in \cite{G}.

\begin{thm}\label{uniqe-stationarity}
The following conditions are equivalent for a Markov operator
$\A$ on a compact metric $G$-space $X$:
\begin{enumerate}
\item
There is a unique $m$-stationary probability measure on $X$;
i.e. $\A$ is uniquely ergodic.
\item
$C(X)=\R+\bar{B}$, where $B=\{f- \A f : f \in C(X)\}$ and $\bar B$ is
its closure in the topology of uniform convergence on $X$.
\item
For every continuous function $f\in C(X)$
the sequence of functions $\A_n f$
converges uniformly to a constant function $f^*$.
\item
For every continuous function $f\in C(X)$
the sequence of functions $\A_n f$ converges pointwise to a
constant function $f^*$.
\item
For every function $f \in \Acal$, for a collection
$\Acal \subset C(X)$ which linearly spans a uniformly dense
subspace of $C(X)$, the sequence of functions $\A_n f$
converges pointwise to a constant function.
\end{enumerate}
\end{thm}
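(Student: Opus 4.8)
The plan is to prove the cyclic chain of implications $(1) \Rightarrow (2) \Rightarrow (3) \Rightarrow (4) \Rightarrow (5) \Rightarrow (1)$, which is the cleanest way to establish the equivalence of five conditions. The backbone is classical ergodic-theoretic functional analysis for Markov operators (as in \cite[Section 5.1]{Kr} and \cite[Theorem 4.9]{G}), so several steps are genuinely routine; the only slightly delicate point is the Hahn--Banach/duality argument in $(1) \Rightarrow (2)$, which I single out below as the main obstacle.

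First I would prove $(1) \Rightarrow (2)$. The point is that $\R \cap \bar B$ is trivial: if a constant $c$ lies in $\bar B$ then for every invariant probability measure $\lambda$ we have $\int c \, d\lambda = \lim \int (f_k - \A f_k)\, d\lambda = 0$ (using $\lambda$-invariance $\int \A h \, d\lambda = \int h\, d\lambda$), so $c = 0$; hence $\R + \bar B$ is a direct sum of closed subspaces, closed in $C(X)$. If $\R + \bar B \ne C(X)$, by Hahn--Banach there is a nonzero signed measure $\nu$ on $X$ annihilating $\R + \bar B$; annihilating $\R$ forces $\nu(X) = 0$, and annihilating $B$ gives $\int (f - \A f)\, d\nu = 0$, i.e. $\A^* \nu = \nu$. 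Writing $\nu = \nu^+ - \nu^-$ via the Jordan decomposition and using positivity of $\A$ together with $\A^*\nu = \nu$ and $\nu(X)=0$, one shows $\A^* \nu^+ = \nu^+$ and $\A^* \nu^-= \nu^-$ (the standard argument: $\nu^+ \ge \A^*\nu^+ \ge 0$ with equal total mass forces equality, likewise for $\nu^-$); normalizing gives two distinct invariant probability measures unless $\nu = 0$, contradicting unique ergodicity. This ``splitting of an invariant signed measure into invariant positive pieces'' is the step requiring the most care.

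Next, $(2) \Rightarrow (3)$: for $f \in B$, say $f = h - \A h$, the Cesàro average telescopes, $\A_n f = \frac1n\sum_{j=1}^n (\A^j h - \A^{j+1}h) = \frac1n(\A h - \A^{n+1} h)$, which tends to $0$ uniformly since $\|\A^k h\|_\infty \le \|h\|_\infty$; for $f \in \bar B$ approximate uniformly by elements of $B$ and use $\|\A_n\| \le 1$; and for a general $f \in C(X) = \R + \bar B$ write $f = c + b$ with $b \in \bar B$, so $\A_n f \to c =: f^*$ uniformly, and $f^* = \int f \, d\lambda$ for the unique invariant $\lambda$. The implications $(3) \Rightarrow (4)$ and $(4) \Rightarrow (5)$ are immediate (uniform convergence implies pointwise, and restricting to a spanning family $\Acal$ is a special case, noting a pointwise limit of the $\A_n f$ along $\Acal$ is automatically constant by hypothesis).

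Finally, $(5) \Rightarrow (1)$: let $\lambda$ be any $m$-stationary measure. For $f \in \Acal$, $\A_n f \to f^*$ pointwise with $f^*$ constant, and by bounded convergence (the $\A_n f$ are uniformly bounded by $\|f\|_\infty$) together with $\int \A_n f\, d\lambda = \int f \, d\lambda$ we get $f^* = \int f\, d\lambda$; thus $\int f \, d\lambda$ is determined for all $f$ in a linearly dense subspace of $C(X)$, hence for all $f \in C(X)$ by density and the bound $\|\A_n\| \le 1$ passing to the limit, so $\lambda$ is unique. Assembling the cycle gives the theorem.
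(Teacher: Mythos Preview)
Your proof is correct and follows the classical functional-analytic approach that the paper itself defers to (the paper gives no proof, stating only that it is ``almost verbatim the same as that of Theorem 4.9 in \cite{G}''). One small slip worth fixing: in $(1)\Rightarrow(2)$ the inequality should read $\A^*\nu^+ \ge \nu^+$, not the reverse --- this comes from the minimality property of the Jordan decomposition applied to the alternative splitting $\nu = \A^*\nu^+ - \A^*\nu^-$ --- after which the equal-total-mass argument yields $\A^*\nu^\pm = \nu^\pm$ as you claim.
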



\begin{defn} 
A compact metric $G$-space $X$ is called {\em $F_m$-proximal} if for each $x,y \in X$,
$m_n\{g : d(gx,gy) > \ep\} \to 0$ as $n \to \infty$ for any $\ep >0$.
\end{defn}

\begin{rmk}
In his paper \cite{F} Furstenberg calls this property $m$-proximality.
Unfortunately in some later works (\cite{FG1}, \cite{FG2}) 
the name ``$m$-proximal" was given the meaning we also adopt here, that of a boundary
(see the paragraph following Theorem 4.1 above).
Thus the reader is warned that $F_m$-proximal stands in this work for
the notion of $m$-proximal in \cite{F}. 
\end{rmk}

The next theorem is from \cite[Theorem 14.1]{F}.

\begin{thm}\label{Fur}
The following are equivalent for a compact metric $G$-space $X$:
\begin{enumerate}
\item
$X$ is $F_m$-proximal.
\item
Any solution $\nu \in M(X  \times X)$ to $m *\nu = \nu$ is concentrated on the diagonal 
$\Del(X) \subset X \times X$.
\item
For any $m$-stationary measure $\nu \in M(X)$ the $m$-stationary system $(X,\nu)$ is $m$-proximal.
\item
For any $\theta \in M(X)$ and $\ep >0$,
$$
\lim_{n \to\infty} m_n\{g \in G : d(g\theta,\del_X) > \ep \} = 0.
$$
\end{enumerate}
\end{thm}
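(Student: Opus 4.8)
The plan is to run the equivalences $(1)\Leftrightarrow(2)\Leftrightarrow(3)$ --- this part is due to Furstenberg, \cite[Theorem 14.1]{F} --- and then to splice in the ``mean proximality'' condition $(4)$ by proving $(4)\Rightarrow(1)$ and $(2)\Rightarrow(4)$. Throughout I would lean on one soft fact: for a compact metric $G$-space $Y$ and any $\rho\in M(Y)$, every weak$^{*}$ limit point of the Ces\`aro averages $m_n*\rho=\frac1n\sum_{j=1}^{n}m^{(j)}*\rho$ is an $m$-stationary measure on $Y$; indeed $m*(m_n*\rho)-m_n*\rho=\frac1n(m^{(n+1)}*\rho-m*\rho)\to 0$ weak$^{*}$, while $\rho\mapsto m*\rho$ is weak$^{*}$-continuous, so any weak$^{*}$ limit of $m_{n_k}*\rho$ is fixed by $m*(\cdot)$. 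I would apply this with $Y=X$ and with $Y=X\times X$.

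For $(1)\Rightarrow(2)$: if $\nu$ is $m$-stationary on $X\times X$ then $m_n*\nu=\nu$, hence $\int d\,d\nu=\int\big(\int d(gx,gy)\,dm_n(g)\big)\,d\nu(x,y)$; by $(1)$ the inner average tends to $0$ pointwise (split at $\ep$ and use $d\le\diam X$), so bounded convergence gives $\int d\,d\nu=0$, i.e.\ $\nu$ lives on the diagonal $\Delta$. For $(2)\Rightarrow(1)$: given $x,y\in X$, any weak$^{*}$ limit point of $\int\delta_{(gx,gy)}\,dm_n(g)$ is $m$-stationary, hence by $(2)$ carried by $\Delta$; since $\{d\ge\ep\}$ is closed and disjoint from $\Delta$, the portmanteau theorem forces $m_n\{g:d(gx,gy)\ge\ep\}\to0$. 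For $(2)\Leftrightarrow(3)$ I would use the conditional measures $\nu_\omega=\lim_n\eta_n\nu$ of an $m$-stationary system $(X,\nu)$ (the convergence result of Section~\ref{sec,m}) together with the identity $\nu_\omega=\xi_1(\omega)\,\nu_{\sigma\omega}$, where $\sigma$ is the shift on $\Omega$: combined with the product structure of $P$ this shows that $\lambda:=\int_\Omega\nu_\omega\otimes\nu_\omega\,dP(\omega)$ satisfies $\lambda=m*\lambda$, so under $(2)$ it is carried by $\Delta$, forcing $(\nu_\omega\otimes\nu_\omega)(\Delta)=1$ and hence $\nu_\omega$ a Dirac mass $P$-a.s.; that is $(X,\nu)$ is $m$-proximal, which is $(2)\Rightarrow(3)$. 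For $(3)\Rightarrow(2)$: given $m$-stationary $\nu$ on $X\times X$ with marginals $\nu_1,\nu_2$, apply $(3)$ to the $m$-stationary measure $\frac12(\nu_1+\nu_2)$; its conditional measures are $\frac12(\nu_1)_\omega+\frac12(\nu_2)_\omega$, and a convex combination of probability measures is a Dirac mass only if each summand is that same Dirac mass, so $(\nu_1)_\omega=(\nu_2)_\omega=\delta_{z(\omega)}$ $P$-a.s.; since these are the marginals of the conditional measure $\nu_\omega^{\times}$ of $(X\times X,\nu)$, necessarily $\nu_\omega^{\times}=\delta_{(z(\omega),z(\omega))}$, whence $\nu(\Delta)=\int\nu_\omega^{\times}(\Delta)\,dP=1$.

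It remains to bring in $(4)$. The implication $(4)\Rightarrow(1)$ is immediate: apply $(4)$ to $\theta=\frac12(\delta_x+\delta_y)$ and observe that $d(gx,gy)>\ep$ forces $g\theta$ to stay a definite distance $\ep_0=\ep_0(\ep)>0$ from the set $\delta_X$ of Dirac masses, so $\{g:d(gx,gy)>\ep\}\subseteq\{g:d(g\theta,\delta_X)>\ep_0\}$ and the right-hand set has $m_n$-measure tending to $0$. For $(2)\Rightarrow(4)$, fix $\theta\in M(X)$ and apply the soft fact to $\rho=\theta\otimes\theta$ on $X\times X$: every weak$^{*}$ limit point of $m_n*(\theta\otimes\theta)=\int(g\theta\otimes g\theta)\,dm_n(g)$ is $m$-stationary, hence by $(2)$ carried by $\Delta$, so $\int s(g\theta)\,dm_n(g)\to0$, where $s(\mu):=\iint d(x,y)\,d\mu(x)\,d\mu(y)$. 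By Markov's inequality $m_n\{g:s(g\theta)>\eta\}\to0$ for every $\eta>0$; and a small spread $s(\mu)$ forces $\mu$ to be weak$^{*}$-close to a Dirac mass --- pick $y_0$ with $\int d(x,y_0)\,d\mu(x)$ small and test against bounded Lipschitz functions --- with a modulus of continuity independent of $\mu$. Choosing $\eta$ so that this modulus falls below $\ep$ then yields $m_n\{g:d(g\theta,\delta_X)>\ep\}\to0$, which is $(4)$.

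I expect the main obstacle to be the step $(2)\Rightarrow(4)$, precisely the passage from the averaged statement $\int s(g\theta)\,dm_n(g)\to0$ to the in-measure statement $m_n\{g:d(g\theta,\delta_X)>\ep\}\to0$ valid for every $\ep$: this is where one needs the uniform quantitative estimate relating the spread $s(\mu)$ to the weak$^{*}$ distance from $\mu$ to $\delta_X$, and it is essentially the only place where the specific geometry of $M(X)$ (compactness of $X$, a concrete metrization of the weak$^{*}$ topology) enters. Everything else rests on the two soft facts that Ces\`aro-averaged push-forwards have $m$-stationary weak$^{*}$ limit points and that a probability measure $\mu$ on $X$ with $(\mu\otimes\mu)(\Delta)=1$ is a Dirac mass.
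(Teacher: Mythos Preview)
The paper does not give a proof of this theorem at all: it is stated with the attribution ``The next theorem is from \cite[Theorem 14.1]{F}'' and then used as a black box. So there is nothing in the paper to compare your argument against.

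That said, your proof is correct and self-contained. A few minor comments. In $(2)\Rightarrow(1)$ and $(2)\Rightarrow(4)$ your use of weak$^*$ compactness plus the portmanteau inequality for closed sets is sound; since every subsequence has a further subsequence whose limit is carried by $\Delta$, the full sequence of $m_n$-masses of the closed set tends to zero. In $(4)\Rightarrow(1)$ and $(2)\Rightarrow(4)$ your quantitative claims (that $d(gx,gy)>\ep$ keeps $\frac12(\delta_{gx}+\delta_{gy})$ uniformly away from $\delta_X$, and that small spread $s(\mu)$ forces $\mu$ uniformly close to $\delta_X$) are cleanest if you either fix the bounded-Lipschitz metric on $M(X)$ from the outset, or else argue once by compactness: were either to fail, a subsequential weak$^*$ limit would produce a non-Dirac measure with both marginals Dirac (respectively, a non-Dirac measure with zero spread), a contradiction. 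In $(2)\Leftrightarrow(3)$ your use of the cocycle identity $\nu_\omega=\xi_1(\omega)\,\nu_{\sigma\omega}$ and the linearity of $\mu\mapsto\mu_\omega$ is exactly what Furstenberg's original argument relies on; your observation that the marginals of $\nu_\omega^{\times}$ are $(\nu_i)_\omega$ (by continuity of the coordinate projections) is the right way to close the loop.

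In short: you have supplied a proof where the paper supplies only a citation, and your argument is correct.
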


To this list we now add the following:

\begin{thm}
The following conditions are equivalent for a compact metric $G$-space $X$:
\begin{enumerate}
\item
$X$ is $F_m$-proximal.
\item
There is a unique $m$-stationary probability measure on $X$, say $\nu$
(i.e. the $G$-space $X$ is uniquely $m$-ergodic),
and the $m$-stationary system $(X,\nu)$ is $m$-proximal.
\end{enumerate}
\end{thm}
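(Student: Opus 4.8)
The plan is to prove the two implications separately, leaning on Theorem~\ref{Fur} and Theorem~\ref{uniqe-stationarity} to translate between the convergence-of-averages formulation of $F_m$-proximality and the statements about stationary measures.

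\emph{Proof.}
$(1)\imp(2)$. Assume $X$ is $F_m$-proximal. By Theorem~\ref{Fur}(3) (applied with any $m$-stationary $\nu$), every $m$-stationary system $(X,\nu)$ is $m$-proximal, so the second half of (2) is automatic; it remains to establish uniqueness. Suppose $\nu_1,\nu_2$ are $m$-stationary probability measures on $X$. Consider the product measure $\nu_1\times\nu_2\in M(X\times X)$; since $m*(\nu_1\times\nu_2)=(m*\nu_1)\times(m*\nu_2)=\nu_1\times\nu_2$, it is an $m$-stationary measure on $X\times X$, and by Theorem~\ref{Fur}(2) it is concentrated on the diagonal $\Del(X)$. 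A product measure supported on the diagonal forces $\nu_1=\nu_2=\del_{x_0}$ for a single point $x_0$ — but that would make $x_0$ a $G$-fixed point, and in any case there is only one such $\nu$. Alternatively, and more robustly (allowing nonatomic $\nu$), one argues: the marginals of a diagonal-supported measure coincide, so $\nu_1=\nu_2$. Hence the $m$-stationary measure is unique.

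$(2)\imp(1)$. Assume $X$ is uniquely $m$-ergodic with unique $m$-stationary measure $\nu$, and that $(X,\nu)$ is $m$-proximal. By Theorem~\ref{uniqe-stationarity}, unique $m$-ergodicity is equivalent to: for every $f\in C(X)$, the averages $\A_n f$ converge uniformly to a constant. I want to deduce condition (4) of Theorem~\ref{Fur}: for any $\theta\in M(X)$ and $\ep>0$, $m_n\{g: d(g\theta,\del_X)>\ep\}\to 0$. First, $m$-proximality of $(X,\nu)$ says the conditional measures $\mu_\om$ of the random walk are $P$-a.s. point masses $\del_{z(\om)}$. The key step is to use unique ergodicity to upgrade this a.s. statement about the $\nu$-walk to a statement about \emph{all} initial measures $\theta$: since $m_n*\delta_x \to \nu$ weakly uniformly in $x$ (this is exactly Theorem~\ref{uniqe-stationarity}(3) read for the Dirac masses, using that $\A_n f(x) = \int f\,d(m_n*\del_x)$), one gets $m_n*\theta\to\nu$ for every $\theta$, and combined with the martingale convergence giving $\eta_n\theta\to$ point mass $P$-a.s., condition (4) follows. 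Then Theorem~\ref{Fur} gives that $X$ is $F_m$-proximal.

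The main obstacle is the direction $(2)\imp(1)$, specifically passing from the a.s.\ proximality of the single system $(X,\nu)$ to the uniform-in-$\theta$ contraction required by Theorem~\ref{Fur}(4). The clean way to handle this: fix $\ep>0$ and cover $X$ by finitely many $\ep/2$-balls; the function $x\mapsto \nu_\om$-type estimates, together with $\A_n\ch_{U}\to \nu(U)$ uniformly for $U$ in a suitable finite family, let one estimate $m_n\{g: \diam(g\,\supp\theta)>\ep\}$ uniformly. Equivalently, one shows directly that $m$-proximality of $(X,\nu)$ plus uniqueness of $\nu$ forces $\nu$-almost every (hence, by support considerations, every) point to be ``mean proximal'' to a common limit, which is precisely condition (4). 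Once that uniform statement is in hand the equivalence with $F_m$-proximality is immediate from Furstenberg's theorem, so no further work is needed. $\square$
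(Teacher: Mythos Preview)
Your argument for $(1)\Rightarrow(2)$ contains a genuine error: the product measure $\nu_1\times\nu_2$ is \emph{not} in general $m$-stationary for the diagonal action. Your claimed identity $m*(\nu_1\times\nu_2)=(m*\nu_1)\times(m*\nu_2)$ is false; convolving with the diagonal action gives $\int (g\nu_1)\times(g\nu_2)\,dm(g)$, and there is no reason for this average of products to equal the product of the averages. Concretely, testing against $f(x)h(y)$ would require $\int\phi\psi\,dm=(\int\phi\,dm)(\int\psi\,dm)$ for $\phi(g)=\int f\,d(g\nu_1)$, $\psi(g)=\int h\,d(g\nu_2)$, which fails in general. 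This is precisely why the paper replaces the naive product by the \emph{natural joining} $\nu_1\curlyvee\nu_2=\int(\nu_1)_\om\times(\nu_2)_\om\,dP(\om)$ from \cite{FG1}, which \emph{is} $m$-stationary by construction; one then applies Theorem~\ref{Fur}(2) to this joining and reads off $\nu_1=\nu_2$ from its marginals.

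For $(2)\Rightarrow(1)$ you are working much too hard, and your sketch does not close: asserting that $\eta_n\theta\to$ a point mass $P$-a.s.\ for arbitrary $\theta$ is exactly what needs to be proved, and $m$-proximality of $(X,\nu)$ only gives this for $\theta=\nu$. The paper's argument is one line: uniqueness of $\nu$ means that condition (3) of Theorem~\ref{Fur} (``for \emph{any} $m$-stationary $\nu$ the system $(X,\nu)$ is $m$-proximal'') is satisfied trivially, since there is only one stationary measure and it is $m$-proximal by hypothesis. Theorem~\ref{Fur} then gives $F_m$-proximality immediately. No appeal to Theorem~\ref{uniqe-stationarity} or to condition (4) is needed.
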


\begin{proof}
(1) $\imp$ (2)\ 
Let $\nu_1$ and $\nu_2$ be two $m$-stationary measures on $X$.
We consider the ``natural" joining 
$$
\nu = \nu_1 \curlyvee \nu_2
=\int (\nu_1)_\om \times (\nu_2)_\om \, dP(\om),
$$
an element of $M(X \times X)$ (see \cite{FG1}, Section 3).
Clearly $m*\nu =\nu$ (with respect to the 
diagonal $G$-action) and by item (2) of the previous theorem we conclude that
$\nu$ is concentrated on the diagonal $\Del(X)$. This clearly implies that $\nu_1 = \nu_2$.

(2) $\imp$ (1)\ 
As we assume that $\nu$ is the unique $m$-stationary measure on $X$ and that $(X,\nu)$ is $m$-proximal, Theorem \ref{Fur} (3) implies that $X$ is $F_m$-proximal.
\end{proof}

\begin{defn}
A compact metric $G$-space $X$ is called {\em mean proximal} if it is $F_m$-proximal
for every $m \in M(G)$ whose closed support is $G$.
\end{defn}

\begin{exa}
Since $\mathbb{F}_2$ appears as a finite index subgroup of the group $PSL_2(\Z)$,
it follows from \cite[Theorem 16.8]{F} that the $G$-space $(\mathbb{P},\mathbb{F}_2)$ is mean proximal.
(See also \cite[Ch. VI.2]{Ma}.)
\end{exa}


\begin{thebibliography}{MMMMM}

\bibitem{Aa}
J. Aaronson,
{\em An introduction to infinite ergodic theory\/},
Math.\ Surveys and Monographs
{\bfseries 50}, Amer.\ Math.\ Soc.\ 1997.

\bibitem{AL}
J. Aaronson and M. Lema\'{n}czyk
{\it Exactness of Rokhlin endomorphisms and weak mixing of Poisson boundaries},
Algebraic and topological dynamics, 77-- 87, Contemp. Math., 
{\bf 385}, Amer. Math. Soc., Providence, RI, 2005. 

\bibitem{ALW}
J. Aaronson, M. Lin, and B. Weiss, 
{\em Mixing properties of Markov operators and ergodic transformations, and ergodicity of Cartesian products},
Israel J. Math. {\bf 33}, (1979), no. 3-4, 198--224, (1980).

%
\bibitem{BF}
U. Bader and A. Furman,
{\em Boundaries, Weyl groups, and superrigidity},
Arxiv:1109.3482v1, Sep. 2011.

\bibitem{Be}
A. Beck, 
{\em Eigen operators of ergodic transformations},
Trans. Amer. Math. Soc. {\bf 94}, (1960), 118--129. 

\bibitem{BK}
H. Becker and A.S.  Kechris, 
{\em The descriptive set theory of Polish group actions}, 
London Mathematical Society Lecture Note Series, {\bf 232},
Cambridge University Press, Cambridge, 1996.

\bibitem{Bj}
M. Bj\"{o}rklund,
{\em Five remarks about random walks on groups},
arXiv:1406.0763.

\bibitem{BFMS}
A. Bowels, L. Fidkowski, A. E. Marinello, and C. E. Silva,  
{\em Double Ergodicity of Infinite Transformations}, 
Illinois J. Math.,  {\bf 45}, (2001), 999--1019.
   
\bibitem{BM}
M. Burger and N. Monod, 
{\em Continuous bounded cohomology and applications to rigidity theory}, 
Geom. Funct. Anal., {\bf 12}, (2002), no. 2, 219--280.

\bibitem{D}
J. Dixmier,
{\em $C^*$-algebras}, North Holland, Amsterdam, 1982.

\bibitem{F1}
H. Furstenberg,
{\em Random walks and discrete subgroups of Lie groups\/},
Advances in probability and related topics, Vol {\bfseries 1},
Dekkers, 1971, pp. 1--63.

\bibitem{F}
H. Furstenberg,
{\em Boundary theory and stochastic processes on homogeneous spaces}. 
Harmonic analysis on homogeneous spaces (Proc. Sympos. Pure Math., Vol. XXVI, 
Williams Coll., Williamstown, Mass., 1972), 193--229. 
Amer. Math. Soc., Providence, R.I., 1973.
 
\bibitem{FG1}
H. Furstenberg and E. Glasner,
{\em Stationary dynamical systems\/},
Contemporary Math.
{\bfseries 532}, (2010), 1--28.

\bibitem{FG2}
H. Furstenberg and E. Glasner, 
{\em Recurrence for stationary group actions}. 
From Fourier analysis and number theory to radon transforms and geometry, 
283--291, Dev. Math., {\bf 28}, Springer, New York, 2013.

\bibitem{G}
E. Glasner,
{\em Ergodic theory via joinings\/},
AMS, Surveys and Monographs, {\bf{101}}, 2003.

\bibitem{Glu}
A. Gl\"{u}cksam,
{\em Ergodic multiplier properties},
arxiv.org:1306.3669

\bibitem{GIILS}
I. Grigoriev, N. Ince, M. C. Iordan, A. Lubin, and C. E. Silva, 
{\em On $\mu$-compatible metrics and measurable sensitivity}, 
Colloquium Math., {\bf 126}, (2012), 53--72.

\bibitem{JKLSS}
J. James, T. Koberda, k. Lindsey, P. Speh and C. E. Silva, 
{\em Measurable Sensitivity},  
Proc. Amer. Math. Soc.,  {\bf 136},  (2008), no. 10, 3549--3559.

\bibitem{J}
W. Jaworski,  
{\em Strongly approximately transitive group actions, the Choquet-Deny theorem, and polynomial growth}, 
Pacific J. Math., {\bf 165},  (1994), 115--129.

\bibitem{Ka1}
V. Kaimanovich,
{\em The Poisson boundary of covering Markov operators},
Israel J. of Math., {\bf 89}, (1995), 77--134.

\bibitem{Ka2}
V. A. Kaimanovich,
{\em SAT actions and ergodic properties of the horosphere foliation}, Rigidity in dynamics and geometry (Cambridge, 2000), 261--282, Springer, Berlin, 2002. 

\bibitem{Ka3}
V. A. Kaimanovich, 
{\em Double ergodicity of the Poisson boundary and applications to bounded cohomology}, 
Geom. Funct. Anal. {\bf 13}, (2003), no. 4, 852--861.

\bibitem{KV}
V. A. Kaimanovich and A. M. Vershik,
{\em Random walks on discrete groups:
boundary and entropy\/},
Ann.\ Probab.\  {\bfseries 11},
(1983), 457--490.

\bibitem{Ke}
A. S. Kechris,
{\em Classical descriptive set theory\/},
Springer-Verlag, Graduate texts in mathematics
{\bfseries 156}, 1991.

\bibitem{Kr}
U, Krengel, 
{\em Ergodic theorems}. 
With a supplement by Antoine Brunel. 
de Gruyter Studies in Mathematics, {\bf 6}. 
Walter de Gruyter \& Co., Berlin, 1985.

\bibitem{Mac}
G. H. Mackey,
{\em The theory of unitary group representations}, 
[an adapted version of the 1955 ``Chicago Notes"], Univ. 
of Chicago Press, Chicago, 1976.

\bibitem{Ma}
G. A. Margulis,
{\em Discrete subgroups of semisimple Lie groups}, 
Ergebnisse der Mathematik und ihrer Grenzgebiete (3) 
[Results in Mathematics and Related Areas (3)], {\bf 17}. Springer-Verlag, Berlin, 1991.

\bibitem{Mo}
N. Monod, 
{\em Continuous bounded cohomology of locally compact groups}, 
Lecture Notes in Mathematics, {\bf 1758}, Springer-Verlag, Berlin, 2001.

\bibitem{MS}
N. Monod and Y. Shalom, 
{\em Cocycle superrigidity and bounded cohomology for negatively curved spaces}, 
J. Differential Geom. {\bf 67}, (2004), no. 3, 395--455.

\bibitem{M}
C. C. Moore,
{\em Groups with Finite Dimensional Irreducible Representations},
Trans. Amer. Math. Soc.,
{\bf 166}, (1972),  401--410.

\bibitem{NZ}
A. Nevo and R. J. Zimmer,
{\em Homogenous projective factors for
actions of semi-simple Lie groups\/},
Invent.\ Math.\ {\bfseries 138}, (1999), 229-252.

\bibitem{T}
J. Tomiyama, 
{\em Invitation to $C^*$-algebras and topological dynamics}, 
World Scientific, 1987.

\bibitem{V}
V. S. Varadarajan,  
{\em Groups of automorphisms of Borel spaces}, 
Trans. Amer. Math. Soc. {\bf 109}, (1963),  191--220. 

\end{thebibliography}
\end{document}